\newcommand{\ev}{{\rm Eval}}
\newcommand{\inte}{{\rm Prim}}
\newcommand{\deri}{{\rm Deri}}
\theoremstyle{plain} 
\newtheorem{theorem}{\indent\sc Theorem}[section]
\newtheorem{lemma}[theorem]{\indent\sc Lemma}
\newtheorem{proposition}[theorem]{\indent\sc Proposition}
\theoremstyle{definition} 
\newtheorem{definition}[theorem]{\indent\sc Definition}
\newtheorem{facts}[theorem]{\indent\sc Facts}
\newtheorem{remark}[theorem]{\indent\sc Remark}
\newtheorem{example}[theorem]{\indent\sc Example}
\newtheorem{notation}[theorem]{\indent\sc Notation}
\newcommand{\C}{\mathbb{C}}
\newcommand{\R}{\mathbb{R}}
\newcommand{\Q}{\mathbb{Q}}
\newcommand{\Z}{\mathbb{Z}} 
\newcommand{\qu}{{\Q}}
\newcommand{\zu}{{\Z}}
\newcommand{\N}{\mathbb{N}}
\newcommand{\Li}{\rm{Li}}
\newcommand{\cd}{\cdots}
\def\cd{\cdots}
\def\2{I\hspace{-.1em}I}
\def\Li{\textrm{Li}}
\title{ Can  polylogarithms at algebraic points be linearly independent?}
\author{\textsc{Sinnou David}, \textsc{Noriko Hirata-Kohno} and \textsc{Makoto Kawashima}}
\date{}
\begin{document}

\maketitle

\hspace{5cm}{\it Dedicated to the memory of  Professor Naum Ilyitch Fel'dman}



\begin{abstract}
Let $r, \,m$ be positive integers. Let $0\leq x < 1$ be a rational number.
We denote by $\Phi_s(x,z)$ the $s$-th Lerch function {$\displaystyle\sum_{k=0}^{\infty}\frac{z^{k+1}}{{(k+x+1)}^s}$}
with $s=1, 2, \cd, r$. When $x=0$, this is the polylogarithmic function.
Let $\alpha_1, \cdots, \alpha_m$
be pairwise distinct algebraic numbers with $0<|\alpha_j|<1 \,\,\,(1\leq j \leq m)$.
We state a linear 
independence criterion over algebraic number fields of all the $rm+1$ numbers:~
$\Phi_1(x,\alpha_1)$, \,$\Phi_2(x,\alpha_1), \,$
$\cdots, \,  \Phi_r(x,\alpha_1)$, \,$\Phi_1(x,\alpha_2)$, \,$\Phi_2(x,\alpha_2), \,$
$\cdots , \, \Phi_r(x,\alpha_2), \cdots, \Phi_1(x,\alpha_m)$, 
~$\Phi_2(x,\alpha_m)$,\,$\cdots, \Phi_r(x,\alpha_m)$ and ~$1$. 
We obtain an explicit sufficient condition for
the linear independence of values of the $r$ Lerch functions $\Phi_1(x,z), \,\,\cd, \,\, \Phi_r(x,z)$ at $m$ {\it distinct points
in an algebraic number field of arbitrary finite degree} without any assumptions on $r$ and $m$.
When $x=0$, our result implies the linear independence of polylogarithms of distinct algebraic numbers of arbitrary degree,  subject to a metric condition.
We give an outline of our proof together with concrete examples of linearly independent polylogarithms.
\end{abstract}
\textit{Key words}:~
Lerch function, polylogarithms, linear independence, the irrationality, Pad\'e appro\-ximation.

\section{Introduction}
Let  $s$ be a  {non-negative} integer and $0\leq x < 1$ be a rational number.
We study the linear independence of values of the $s$-th Lerch function defined by
$$
\Phi_s(x,z) =\sum_{k=0}^{\infty}\frac{z^{k+1}}{{(k+x+1)}^s}~, \,\,\,\,\,\, z\in \C, \,\,\,  \vert z\vert <1\enspace.
$$
The $s$-th Lerch function $\Phi_s(x,z)$ satisfies the inhomogeneous differential equation:
\begin{align} \label{inhom diff eq}
\dfrac{d}{dz}\Phi_s(x,z)=\dfrac{1}{z}\Phi_{s-1}(x,z)-\dfrac{x}{z}\Phi_s(x,z){, \quad(s\geq 1)}.
\end{align}
Then the $s$-th Lerch function is a $G$-function in the sense of Siegel ({\it confer}  \cite{FN}, \cite{Siegel}).

Note that in the case of $x=0$, we have $\Phi_s(0,z)={\rm{Li}}_s(z)$ where 
\begin{equation*}
\Li_s(z) =\sum_{k=0}^{\infty}\frac{z^{k+1}}{{(k+1)}^s}~, \,\,\,\,\,\,z\in \C, \,\,\,  \vert z\vert <1,
\end{equation*}
is the $s$-th polylogarithmic function. 

\vspace{\baselineskip}
  
Let $r, \,m$ be positive integers and $K$ be an algebraic number field. 
Consider $\alpha_1,\ldots,\alpha_m\in K\setminus\{0\}$ with $\alpha_{i_1}\neq \alpha_{i_2}$ for $1\le i_1<i_2 \le m$ and $0 \le x\in \Q$.

\

We define the vector of formal power series $\vec{\Phi}$ by
$$\vec{\Phi}:={{}^{t}\kern-1pt(}1,\Phi_1(x,\alpha_1z),\ldots,\Phi_{r}(x,\alpha_1z),\ldots, \Phi_1(x,\alpha_mz),\ldots,\Phi_{r}(x,\alpha_mz))\in K[[z]]^{rm+1},$$
the vector of rational functions $\vec{A}(\alpha_i):={}^{t}(\alpha_i/(1-\alpha_i z),0, \ldots,0)\in K(z)^{r}$
and an $r\times r$ matrix $A(x)$ by
{$$A(x)=
\begin{pmatrix}
\tfrac{-x}{z} & 0 & \ldots & 0\\
\tfrac{1}{z}& \tfrac{-x}{z}  & \ldots & 0\\ 
\vdots    & \ddots &  \ddots & \vdots\\
0     &  \dots & \tfrac{1}{z} & \tfrac{-x}{z} 
\end{pmatrix}\quad (\text{if}\,\,\,r\geq 2), \quad A(x)=\left(\dfrac{-x}{z}\right) \quad (\text{if}\,\,\,r=1).$$}
Then, taking the differential equation~(\ref{inhom diff eq}) into account,  the vector $\vec{\Phi}$ satisfies the following system of differential equations in $\vec{y}$\,: 
\begin{align} \label{linear diff eq}
\dfrac{d}{dz}\vec{y}=\begin{pmatrix}
0 & 0 & \ldots & 0\\
\vec{A}(\alpha_1) & A(x) & \ldots & O\\
\vdots & \vdots & \ddots & \vdots\\
\vec{A}(\alpha_m) & O & \dots & A(x)
\end{pmatrix}\vec{y}\enspace.
\end{align} 
We see that (\ref{linear diff eq}) is indeed a system of {\it homogenous} differential equations in $\vec{y}$.

\vspace{\baselineskip}

We consider $r$ Lerch functions $\Phi_s(x,z), \,\,1\leq s \leq r$.
The linear independence of  $\Li_s(\alpha)$ at one rational number $\alpha $, with $1\leq s \leq r$, was studied by E.~M.~Niki\v{s}in \cite{N} in 1979.
It was generalized to the Lerch function by Kawashima \cite{Ka} and to algebraic cases by M. Hirose, M. Kawashima and N. Sato \cite{H-K-S}.
See also \cite{H-I-W} for examples.
In 1990, M. Hata \cite{Ha} adapted generalized Legendre polynomials modifying Pad\'e type
constructions of G.~V.~Chudnovsky  developed in \cite{ch2}, \cite{ch3}, \cite{ch9}, \cite{Chubrothers}, to obtain the linear independence of  $\Li_s(\alpha)$ 
  (indeed of the Lerch transcendent function)
for different $s$ but at one rational number $\alpha$. His result implies the irrationality of $\Li_2(1/q)$
with $q$ integer $q\geq 12$ whereas Chudnovsky announced in  \cite{ch2} the irrationality of $\Li_2(1/q)$  
with $q\geq 14$.
Later, Hata gave in 1993 the irrationality of the value of $\Li_2(1/q)$ in \cite{Ha1993}
with $q$ integer $q\geq 7$ or $q\leq -5$.

In 2005, Rhin and C. Viola \cite{RV1} adapted their permutation group method, established in 1996 \cite{RV0},
to get the irrationality of $\Li_2(\alpha)$ for certain $\alpha\in\Q$, involving 
the irrationality $\Li_2(1/q)$ with $q\geq 6$, $q\in\Z$ in qualitative andquantitative forms. More recently in 2018, 
Viola and W.~Zudilin \cite{VZ1} extended the permutation group method
with constructions 
to establish the linear independence of  $1, \, \Li_1(1/q)$, $\Li_2(1/q)$, $\Li_2(1/(1-q))$ over $\Q$
with an integer $q\geq 9$ or $q\leq -8$ and more generally,  that of
$1, \, \Li_1(\alpha)$, $\Li_2(\alpha)$, $\Li_2(\alpha/(\alpha-1))$ for certain $\alpha\in\Q$.
{See also important related works in \cite{FSZ} \cite{marc} \cite{Mi} \cite{Ri} \cite{Z}.}

With respect to logarithms, 
G.~Rhin and P.~Toffin \cite{R-T} created a system of
Pad\'e approximants to show the linear independence
of the natural logarithms of  {\it distinct}  $\alpha_1, \cdots, \alpha_m$, 
either rational or quadratic imaginary numbers, under a {\it metric} condition requiring the points $\alpha_1, \cdots, \alpha_m$
to be very close to
the origin $0$. This method provides a  refinement of previous lowed bounds for linear forms in logarithms,
especially for effective  bounds obtained by A. Baker \cite{Baker1975} and an essential improvement due to N. I. Fel'dman
\cite{fe1}, valid  under the above stated metric condition. 
This proof in \cite{R-T} opened a new path, albeit unexplored systematically, during the next decades  to show the linear independence of logarithms over $\Q$ at
{\it distinct} $\alpha\in\Q$, relying only on Pad\'e approximations.

Since $\Li_1(z)$ coincides with the usual natural logarithm, the Rhin-Toffin method suggests how to adapt 
Pad\'e approximations to deal with the linear independence of polylogarithms at distinct points
$\alpha_1, \cdots, \alpha_m$. 

\vspace{\baselineskip}

We give a new criterion
to show the linear 
independence of all the $rm+1$ numbers:~ $\Phi_1(x,\alpha_1)$, \,$\Phi_2(x,\alpha_1), \,$
$\cdots , \ , \Phi_r(x,\alpha_1)$, \,\,$\Phi_1(x,\alpha_2)$, \,$\Phi_2(x,\alpha_2), \,$
$\cdots , \, \Phi_r(x,\alpha_2), \cdots \cdots, \Phi_1(x,\alpha_m)$, 
~$\Phi_2(x,\alpha_m)$, ~$\cdots , \Phi_r(x,\alpha_m)$ and $1$, over an algebraic number field $K$, supposing
$\alpha_1, \cdots, \alpha_m$ pairwise distinct  in $K$, assumed to be
sufficiently close to the origin, which we will make precise later.
We also give an outline of our proof with basic ideas.

Our linear independence criterion for the values of the Lerch functions, including the case of
polylo\-ga\-rithmic functions, 
at {\it distinct points in an algebraic number field of arbitrary finite degree},
is not covered by the previous criterion in \cite{G1}, \cite{G2}, as is explained below in Remark $1.1.$, Remark $1.3.$, Remark $1.4.$  and Example $6.3$.

\begin{remark}\label{gal}
Let us describe here previous linear independence results concerning with values of the Lerch functions, at distinct rational or ima\-ginary quadratic points, due to
A.~I.~Galochkin \cite{G1}, \cite{G2}, Y. Z. Flicker, \cite{Fli} K.~V$\ddot{\text{a}}$$\ddot{\text{a}}$n$\ddot{\text{a}}$nen \cite{Va}, together with 
a result  by K.~V$\ddot{\text{a}}$$\ddot{\text{a}}$n$\ddot{\text{a}}$nen and G. Xu \cite{Va-Gu}.
First, we introduce the result of Galochkin, Theorem $1$ in \cite{G1}. All notation and conventions are those of the above mentioned article, pages 385-388,  see also \cite{Nurma}.
\begin{theorem}[Galochkin, Theorem $1$ \cite{G1}]\label{Galochkin}
Let $I$ be $\Q$ or an imaginary quadratic field and $K$ be a finite extension of $I$ with $[K:I]=\kappa<\infty$.
For {$1\leq s\in \Z$}, consider $f_1(z),\ldots,f_s(z)\in K[[z]]$ which belong to the subclass $G(K,C_0,Q,\Lambda)$ with $C_0Q\ge 2$, $C=\max(1,C_0)$ $($see {\rm{\cite[Definition $1$, $2$]{G1}}}$)$. Assume that the functions are not connected by any non-zero polynomial in $s$ variables, of degree not exceeding $N$, with coefficients in $\C(z)$.
Let {$1\leq d\in\Z$} and $u:=\binom{N+s}{s}+\kappa\binom{N-d+s}{s}-\kappa\binom{N+s}{s}$ with $N\ge d$.\\
Suppose now
\begin{equation}
\label{existpade} u>0\,.
\end{equation} 
Then there exists an explicit constant $c_0>0$ which depends on $N,d$ and $f_1(z),\ldots,f_s(z)$, satisfying the following property$:$~for any integer with $|q|>c_0$ and a nonzero polynomial $P(x_1,\ldots,x_s)\in \Z[x_1,\ldots,x_s]$ of degree $d\le N$,
we have
$$P(f_1(1/q),\ldots,f_s(1/q))\neq 0\enspace.$$  
In particular, when $d=1$, we have $su=\binom{N-1+s}{N}\{N+s(1-\kappa)\}$.
Thus, under the condition that $N>s(\kappa-1)$ together with the assumption of the algebraic independence
of the functions $f_1(z),\ldots,f_s(z)$  over $\C(z)$, the linear independence of values of these $s$ functions over $K$
at the point $1/q$ follows. 
\end{theorem}
It is worth noting that Flicker \cite{Fli} proved a $p$-adic analogue of Galochkin's theorem. Building on both Galochkin's and Flicker's work, V$\ddot{\text{a}}$$\ddot{\text{a}}$n$\ddot{\text{a}}$nen  \cite{Va} refined the above mentioned results and generalized to a system of differential equations, both in the complex and the $p$-adic cases and also proved a Baker type lower bound for linear combinations of classical logarithms and polylogarithms, also subject to a metric condition as above.

\bigskip

For these results to work, one needs that the $G$-functions belong to the subclass $G(K,C_0,Q,\Lambda)$ with $C_0Q\ge 2$, that is, roughly speaking, 
a set of particular $G$-functions satisfying a system of linear differential equations, under the hypothesis
so-called Galochkin condition or $(G, C)$-condition, Definition $2$ in \cite{G1} ({\it same}  as  $(G, C)$-function condition in \cite{ch10} and as $(G, C)$-assumption in
\cite{Chubrothers}). 
 
More significant progress was made by Chudnovsky \cite{ch10}, who proved that, for $G$-functions satisfying a differential equation system as in (\ref{linear diff eq}), Galochkin's condition automatically holds.
 
 \vspace{\baselineskip}
Summing up, thanks to the above mentioned results, as soon as we can show that the considered $G$-functions satisfy a linear system of differential equations as in (\ref{linear diff eq}), as well as that the functions are {\it  linearly independent} over $\C(z)$, we get the linear independence of the special values {\it provided} condition~(\ref{existpade}) is satisfied.
Condition~(\ref{existpade}) comes from the use of Siegel's lemma to construct Pad\'e approximants (whereas we avoid using
Siegel's lemma in the present article).
 
We are now in a position to compare our results with the above mentioned series of results. Res\-training ourselves to 
the functions
$1, \Phi_s(x,\alpha_i z)$ with $1\le i \le m, 1 \le s \le r$, one can check they are {\it  linearly independent} over $\C(z)$,
in a similar way to \rm{\cite[p.\,\,292, 293]{Va}}; see  \cite{DHK1} (it may be worth noting that Galochkin's condition can be checked by hand in this special case, and thus one can also proceed without using Chudnovsky's observation). 
Hence, we are in the case $N=1$, thus necessarily $d=1$.
 
However, for $N=d=1$, condition~(\ref{existpade}) reads $u=s(1-\kappa)+1\le -s+1<0$ if $\kappa\ge 2$, hence the assumption $u>0$ of  Galochkin's theorem never holds when $N=1$ as soon as the base field considered is {\it not contained} in an imaginary quadratic field.

On the contrary, our criterion covers also such a case,  since the base field can be  an arbitrary number field.
Namely our result gives the linear independence of values of the Lerch functions, when $N=1$,
applying our explicit construction of Pad\'{e} approximations of $1,\Phi_s(x,\alpha_i/z)$
that is done around infinity, not around the origin (this is one of the reasons
why our corresponding assumption is much weaker than that of Galochkin's theorem).
Nevertheless, as we see in Example 6.3 below, our linear independence result for the values of Lerch functions is valid for algebraic points in $K$ 
{\it of arbitrary degree}, to which  neither  Galochkin's, nor V$\ddot{\text{a}}$$\ddot{\text{a}}$n$\ddot{\text{a}}$nen's results \cite{Va},   \cite{G1},  \cite{G2} apply.
\end{remark}

\begin{remark} It is also worth noting that our result (see \cite{DHK1} for details) is quantitative, with totally
explicit constants which is not the case of previous results.
\end{remark}

\begin{remark}\label{VAG}
A result by V$\ddot{\text{a}}$$\ddot{\text{a}}$n$\ddot{\text{a}}$nen-Xu \cite{Va-Gu} actually deals with general base fields as in our case. However, this is not applicable in our situation, because of the degenerate nature of the system~(\ref{linear diff eq}).
%
%
\end{remark}
%
%

\

The new ingredient in the article relies on a few points. First and foremost, we introduce a systematic construction of Pad\'e approximants, which heavily relies  on the computations made by past authors.
Our modifications and generalizations of the method of Niki\v{s}in developed in \cite{N0} \cite{N} as well as of the Rhin-Toffin method \cite{R-T},
supply {{\it a formally regulated  construction}}  of Pad\'e  approximants. Secondly an irrationality criterion, combined with the metric property provided for by Pad\'e approximation, leads to the irrationality for the values of the Lerch functions at
points sufficiently close to the origin (the precise sufficient condition, which we explain later, comes from the coupling of the criteria with Pad\'e approximation). This strategy works only if one can ensure {{\it the injectivity of evaluation maps}} defined by systems of Pad\'e approximation, which can be now interpreted as a non-vanishing property of a Hermite-type determinant,
which we succeed in proving. Our criterion also gives much more
relaxed assumptions than the previous results in \cite{G1}  \cite{G2}, since we
rely on our new {{\it formal}} construction of explicit Pad\'{e} approximants, by avoiding the use of Siegel's lemma.

\section{Notation and Main results}
We fix an algebraic closure of $\Q$ and denote it by $\overline{\Q}$. For a finite  subset $S\subset \overline{\Q}$, we define the denominator of $S$ by 
$${\rm{den}}(S):=\min\{0 < n\in\Z | \ n\alpha \ \text{is an algebraic integer for any} \ \alpha\in S\}\,.$$

\medskip

Let $\N$ be the set of strictly positive integers. Let $m, r\in \N$ and $K$ be an algebraic number field of finite degree over $\Q$. We denote the ring of integers of $K$ by $\mathcal{O}_K$ and the completion of $K$ with respect to the fixed embedding $\iota_{\infty}:K\hookrightarrow \C$ by $K_{\infty}$. 
Then $[K_{\infty}:\R]=1$ if $K_{\infty}\subset \mathbb{R}$, and
$[K_{\infty}:\R]=2$ otherwise.

\medskip

Let $x\in \Q\cap [0,1)$. 
Put $$\mu(x):={\rm{den}}(x)\prod_{q:{\rm{prime}}, q| {\rm{den}}(x)}q^{1/(q-1)}\enspace.$$
Consider 
$\boldsymbol{\alpha}:=(\alpha_1,\ldots,\alpha_m)\in (K\setminus\{0\})^m$ with $\alpha_i\neq \alpha_j$ for all $1\leq i< j \leq m$.
For $1\le g\le [K:\Q]$, we denote by  $\alpha^{(g)}$ the $g$-th conjugate 
of $\alpha\in K$ over $\Q$.  

\medskip

Let $\beta\in K\setminus\{0\}$ with $\max_{1\leq i \leq m} (|\alpha_i| )<|\beta|$. 
We put
$$D(\boldsymbol{\alpha},\beta):={\rm{den}}(\alpha_1,\ldots,\alpha_m,\beta)\enspace.$$ 

\noindent
We also define
\begin{align*} 
&\mathbb{A}(\boldsymbol{\alpha},\beta,x):={\rm{log\,}}|\beta|-(rm+1){\rm{log\,}}\max_i(|\alpha_i|)-\{rm({\rm{log\,}}D(\boldsymbol{\alpha},\beta)+r[{\rm{den}}(x)+{\rm{log\,}}(5/2)])+r({\rm{log\,}}3+{\rm{log\,}}\mu(x))\}\enspace,\\
&\mathcal{A}^{(g)}(\boldsymbol{\alpha},\beta,x):=rm\left({\rm{log\,}}D(\boldsymbol{\alpha},\beta)+{\rm{log\,}}\max(1, \min(|\alpha^{(g)}_i|)^{-1}\cdot|\beta^{(g)}|)+r[{\rm{den}}(x)-{\rm{log\,}}2]\right)\\
&\ \ \ \ \ \ \ \ \ \ +r\left({\rm{log\,}}\mu(x)+\sum_{i=1}^m{\rm{log\,}}(2^r|\alpha_i|+3^r\max(|\alpha^{(g)}_{i}|,|\beta^{(g)}|))\right)+{\rm{log\,}}3
 \ \ \ \ \text{for} \ \  1\le g\le [K:\Q]\enspace,
\end{align*}
and
$$
V(\boldsymbol{\alpha},\beta,x):=\mathbb{A}(\boldsymbol{\alpha},\beta,x)+\mathcal{A}^{(1)}(\boldsymbol{\alpha},\beta,x)-\dfrac{\sum_{g=1}^{[K:\Q]}\mathcal{A}^{(g)}(\boldsymbol{\alpha},\beta,x)}{[K_{\infty}:\R]}\enspace.
$$

\bigskip

We then obtain the following statement.
{{\begin{theorem} \label{Lerch}
Assume $V(\boldsymbol{\alpha},\beta,x)>0$.
Then the $rm+1$ numbers$:$~
$$1,\Phi_1(x,\alpha_1/\beta),\ldots,\Phi_r(x,\alpha_1/\beta),\ldots, \Phi_1(x,\alpha_m/\beta),\ldots,\Phi_r(x,\alpha_m/\beta)\enspace,$$ are linearly independent over $K$.
\end{theorem}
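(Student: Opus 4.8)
The plan is to carry out the classical three--step scheme for linear independence by Pad\'e approximation: construct explicit approximants depending on a parameter $n\to\infty$, bound their arithmetic and analytic sizes, and feed everything into a linear independence criterion; the combination of those bounds that the criterion demands is precisely the quantity $V(\boldsymbol{\alpha},\beta,x)$, so that the hypothesis $V(\boldsymbol{\alpha},\beta,x)>0$ is exactly what is needed. The starting point is the integral representation
$$
\Phi_s(x,z)=\frac{1}{(s-1)!}\int_{0}^{1}\frac{z\,t^{x}(-\log t)^{s-1}}{1-zt}\,dt\qquad(1\le s\le r,\ |z|<1),
$$
which follows from $\int_0^1 t^{k+x}(-\log t)^{s-1}\,dt=(s-1)!/(k+x+1)^{s}$. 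Writing $\zeta_i=\alpha_i/\beta$, so that $|\zeta_i|<1$, I would, for each $n$ and each shift $h\in\{0,\dots,rm\}$, produce polynomials $A^{[h]}_{n,i,s}(z),B^{[h]}_{n}(z)\in K[z]$ of degrees growing like $mn$ such that
$$
R^{[h]}_{n}(z)=B^{[h]}_{n}(z)+\sum_{i=1}^{m}\sum_{s=1}^{r}A^{[h]}_{n,i,s}(z)\,\Phi_{s}(x,\zeta_i z)
$$
vanishes at $z=0$ to order growing like $(rm+1)n$. These polynomials would be read off, through a partial--fraction decomposition adapted to the points $\zeta_i$, from a single Legendre--type weight generalising the constructions of Niki\v{s}in and of Rhin--Toffin, so that after substituting the integral representation $R^{[h]}_{n}$ becomes an integral over $[0,1]$ whose integrand carries the prescribed high-order zero at $z=0$. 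Specialising $z=1$ then yields $rm+1$ linear forms $L^{[h]}_{n}=b^{[h]}_{n}+\sum_{i,s}a^{[h]}_{n,i,s}\Phi_{s}(x,\alpha_i/\beta)$, $0\le h\le rm$, in the $rm+1$ numbers of the statement.

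Next I would establish the three families of size estimates that assemble into $\mathbb{A}$ and the $\mathcal{A}^{(g)}$. Arithmetically, I would exhibit $\Delta_n\in\N$ --- a power of $D(\boldsymbol{\alpha},\beta)$ times an integer absorbing the denominators produced by the $x$-shifted integrals --- with $\Delta_n a^{[h]}_{n,i,s},\Delta_n b^{[h]}_{n}\in\mathcal{O}_K$; bounding $\Delta_n$ requires a Feldman--Chudnovsky type estimate for the common denominators of the sums $\sum 1/(k+x+1)^{s}$, and this is where $\mathrm{den}(x)$, the factor $\mu(x)=\mathrm{den}(x)\prod_{q\mid\mathrm{den}(x)}q^{1/(q-1)}$, and the numerical constants $\log(5/2),\log2,\log3$ enter. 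Archimedeanly, at the fixed embedding and at every conjugate $g$, expanding the weight polynomials binomially and reading off the poles at $T=\beta/\alpha_i$ gives $|a^{[h],(g)}_{n,i,s}|,|b^{[h],(g)}_{n}|\le e^{\,n\mathcal{A}^{(g)}(\boldsymbol{\alpha},\beta,x)+o(n)}$, which accounts for the terms $\log\max(1,(\min_i|\alpha^{(g)}_i|)^{-1}|\beta^{(g)}|)$ and $\log(2^{r}|\alpha_i|+3^{r}\max(|\alpha^{(g)}_i|,|\beta^{(g)}|))$. Analytically, a Laplace--type estimate of $R^{[h]}_{n}(1)=L^{[h]}_{n}$ from the integral representation and $|\zeta_i|<1$ gives $|L^{[h]}_{n}|\le e^{-n\mathbb{A}(\boldsymbol{\alpha},\beta,x)+o(n)}$ (understood after clearing denominators), the dominant gain $\log|\beta|-(rm+1)\log\max_i|\alpha_i|$ reflecting the vanishing order at $z=0$ together with the sizes of the $\alpha_i$ and of $\beta$.

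Finally I would prove that the $(rm+1)\times(rm+1)$ matrix of coefficients of $L^{[0]}_{n},\dots,L^{[rm]}_{n}$ is non-singular for infinitely many $n$, and then conclude by contradiction: if the $rm+1$ numbers satisfied a nontrivial $K$-linear relation, the forms $L^{[h]}_{n}$ --- with scaled coefficients in $\mathcal{O}_K$, coefficient matrix invertible, and sizes controlled as above --- would feed into the number-field linear independence criterion, whose lower bound, combined with the upper bounds just obtained, is incompatible with $V(\boldsymbol{\alpha},\beta,x)>0$. The hard part will be the non-vanishing of that matrix: it is an Hermite--type determinant attached to the functions $1,\Phi_1,\dots,\Phi_r$ and to the pairwise distinct nonzero points $\zeta_1,\dots,\zeta_m$, and, in contrast with the one-point situation --- where the analogous object is essentially a classical generalised Vandermonde --- here one must show that a large, highly structured determinant does not degenerate; this is exactly where $\alpha_i\ne0$ and $\alpha_i\ne\alpha_j$ are genuinely used, and it is the new ingredient of the paper. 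A secondary technical difficulty is sharpening the denominator estimates enough to match $\mu(x)$ and the factor $\prod_{q\mid\mathrm{den}(x)}q^{1/(q-1)}$ rather than a cruder bound.
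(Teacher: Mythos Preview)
Your plan is essentially the paper's own proof: explicit Pad\'e approximants of the Lerch functions at the $m$ points (the paper builds them via the operators $S_{n,x}^{(r)}$ and $\varphi_{\alpha_i,x,s}$, which are exactly the formal versions of your integral weights), the size and denominator estimates feeding into a number-field linear independence criterion, and the non-vanishing of the $(rm+1)\times(rm+1)$ Hermite-type determinant as the decisive new step. For that last step, which you correctly flag as the hard part but leave open, the paper's method is to show first that the determinant is \emph{constant in $z$} by a degree/valuation count, then to regard it as a polynomial in the $\alpha_i$, factor out the obvious powers of $\prod_i\alpha_i$ and $\prod_{i<j}(\alpha_j-\alpha_i)$ by homogeneity and divisibility arguments, and finally identify the remaining scalar by specialising $\alpha_m=0$ (induction on $m$) and evaluating the resulting one-point integral over $[0,1]^r$ directly; this gives non-vanishing for \emph{every} $n$, not merely infinitely many.
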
}}
In the special case, where $K$ equals $\Q$ or an imaginary quadratic field,
Corollary $6$ in \cite{Va} gives an analogous quantitative result for polylogarithms, but the needed condition there is not so explicit as ours $V (\alpha,\beta,0)>0$. For a general number field $K$, Theorem $\ref{Lerch}$ is the first result to give the linear independence of the values of the Lerch function, even in the case of
polylogarithms, at distinct algebraic numbers.

\section{Construction of Pad\'{e} approximants}
We now explain how we construct Pad\'{e} approximants of the Lerch functions. Since the full proof is long, then with the relevant details, it will be provided for in the forthcoming articles \cite{DHK1},  \cite{DHK2},  with a $p$-adic analogue as well as quantitative measures of linear independence.

First we recall the definition of Pad\'{e} approximants of formal Laurent series. In the rest of this section, we denote by $L$ a unique factorization domain of characteristic $0$.  
We define the order function ${\rm{ord}}_{\infty}$ at ``$z=\infty$" by
\begin{align*}
{\rm{ord}}_{\infty}:L[z][[1/z]]\rightarrow \Z\cup \{\infty\} , \ \ \sum_{k}a_k\cdot\dfrac{1}{z^k}\mapsto \min\{k\in \Z\mid a_k\neq 0\}\enspace.
\end{align*}    
\begin{lemma} \label{pade}
Let $r$ be a positive integer, $f_1(z),\ldots,f_r(z)\in 1/z\cdot L[[1/z]]$ and $\bold{n}:=(n_1,\ldots,n_r)\in \N^{r}$.
Put $N:=\sum_{i=1}^rn_i$.
Let $M$ be a positive integer with $M\ge N$. Then there exists a family of polynomials 
$(P_0(z),P_{1}(z),\ldots,P_r(z))\in L[z]^{r+1}\setminus\{\bold{0}\}$ satisfying the following conditions$:$~
\begin{align*} 
&({\rm{i}}) \ {\rm{deg}}P_{0}(z)\le M\enspace,\\
&({\rm{ii}}) \ {\rm{ord}}_{\infty} P_{0}(z)f_j(z)-P_j(z)\ge n_j+1 \ \text{for} \ 1\le j \le r\enspace.
\end{align*}
\end{lemma}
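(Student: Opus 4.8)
The plan is to prove Lemma~\ref{pade} by a linear-algebra / counting argument, exhibiting the polynomials $P_0, P_1, \ldots, P_r$ as a nonzero solution of a homogeneous linear system that is underdetermined. First I would set $P_0(z)$ to be an \emph{unknown} polynomial of degree at most $M$; this involves $M+1$ unknown coefficients. For each $j$ with $1 \le j \le r$, write $f_j(z) = \sum_{k\ge 1} a_{j,k} z^{-k}$ with $a_{j,k}\in L$. The product $P_0(z) f_j(z)$ lies in $L[z][[1/z]]$; its ``polynomial part'' (the terms with nonnegative powers of $z$) is a polynomial of degree at most $M-1$, and I would \emph{define} $P_j(z)$ to be exactly this polynomial part. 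With this choice, $P_0(z)f_j(z) - P_j(z)$ automatically lies in $1/z \cdot L[[1/z]]$, so condition (ii) is the requirement that the coefficients of $z^{-1}, z^{-2}, \ldots, z^{-n_j}$ in $P_0(z)f_j(z)$ all vanish — that is, $n_j$ linear conditions on the coefficients of $P_0$.

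The key step is then the dimension count. The total number of homogeneous linear conditions imposed on the $M+1$ coefficients of $P_0$ is $\sum_{j=1}^r n_j = N$. Since $M \ge N$, we have $M+1 > N$, so the system has strictly more unknowns than equations and therefore admits a nonzero solution $P_0(z) \in L[z]$ (working over the fraction field of $L$ to solve the system, then clearing denominators to return to $L$; here one uses that $L$ has characteristic $0$ and is a UFD, though really only that it is an integral domain so that its fraction field exists). Fixing such a $P_0$, the polynomials $P_1, \ldots, P_r$ are determined as the polynomial parts above, and they lie in $L[z]$ because the $a_{j,k}$ and the coefficients of $P_0$ do. Condition (i) holds by construction, condition (ii) holds by the vanishing we imposed, and the tuple $(P_0, \ldots, P_r)$ is nonzero because $P_0 \ne 0$.

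One small point I would be careful about: one must check that $(P_0, P_1, \ldots, P_r) \neq \mathbf{0}$, and this is immediate since $P_0 \neq 0$ already; there is no need to argue about the $P_j$. A second point worth a sentence is that the ``polynomial part'' operation $L[z][[1/z]] \to L[z]$ sending a Laurent series to its terms of nonnegative degree is $L$-linear and sends $P_0(z) f_j(z)$ to a polynomial of degree $\le M-1 \le M$, which is needed only implicitly (the lemma does not constrain $\deg P_j$, but it is reassuring that these are genuine polynomials).

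I do not expect a serious obstacle here; the only thing requiring the slightest care is the passage between the fraction field of $L$ (where Gaussian elimination produces a solution) and $L$ itself (where the conclusion is stated), which is handled by multiplying through by a common denominator. The hypothesis that $L$ is a UFD is not really needed for existence — an integral domain suffices — but it is presumably invoked because later applications want $\gcd$-type normalizations of the $P_i$; for the bare statement of Lemma~\ref{pade} the argument above is complete.
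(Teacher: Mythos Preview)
Your argument is correct and is the standard pigeonhole/linear-algebra proof of the existence of Pad\'e approximants. The paper does not actually supply a proof of Lemma~\ref{pade}; it is stated and immediately followed by the definition of Pad\'e type approximants, so there is nothing to compare against beyond noting that your write-up fills in exactly the routine argument the authors leave implicit.
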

\begin{definition} 
We use the same notation as those in Lemma $\ref{pade}$. 
We call a family of polynomials $(P_0(z),P_{1}(z),\ldots,P_r(z)) \in L[z]^{r+1}$ satisfying the properties $({\rm{i}})$ and $({\rm{ii}})$, 
{\it Pad\'{e} type approximants of} $(f_1,\ldots,f_r)$,  {\it  of weight} $\bold{n}$ {\it and of degree} $M$.

For the Pad\'{e} type approximants $(P_0(z),P_{1}(z),\ldots,P_r(z))$, of $(f_1,\ldots,f_r)$ of weight $\bold{n}$, 
we call the family of formal Laurent series $(P_{0}(z)f_j(z)-P_{j}(z))_{1\le j \le r}$,
{\it Pad\'{e} type approximation  systems of }$(f_1,\ldots,f_r)$,  {\it  of weight} $\bold{n}$ {\it and of degree} $M$.
\end{definition}

In the sequel, we take $x\in L\setminus \Z_{<0}$ and assume $x+k$ are invertible in $L$ for any $k\in \N$.

We now introduce notation for formal primitive, derivation, and evaluation maps. Let $I$ be a finite set, we assume that $L$ contains $K[X_i,1/X_i]_{X_i\in I}$ where $K$ is a number field. In the sequel, it will be convenient to work formally and thus to treat as many quantities as variables as is useful, and we shall freely extend the set $I$ as need arises.

\begin{notation} \label{notationderiprim}
\begin{itemize}
\item[(i)]  For $\alpha\in L$, We denote by ${\ev}_{\alpha}$ the linear evaluation map $L[{T}]\longrightarrow L$, $P\longmapsto P(\alpha)$. 
\item[(ii)] For $P\in L[T]$, we denote by $[P]$ the multiplication by $P$ ($Q\longmapsto PQ$).
\item[(iii)] We also denote by $\inte_x$ { (formal primitive) }the linear operator $L[T]\longrightarrow L[T]$, defined by $$P\longmapsto \frac{1}{T^{1+x}}\int_{0}^{T}{\xi}^xP(\xi)d\xi.$$ 
\item[(iv)] We denote by $\deri_x$ the derivative map $$P\longmapsto T^{-x}\tfrac{d}{dT}(T^{x+1}P(T)),$$ and for $n\geq 1$, by $S_{n,x}$ the map taking $$T^k {\longmapsto }\dfrac{(k+x+1)_n}{n!}T^k,$$ where 
$(k+x+1)_n:=(k+x+1)\ldots(k+x+n),$  that is, the
divided derivative mapping $$P\longmapsto \frac{1}{n!}T^{-x}\frac{d^n}{dT^n}(T^{n+x}P)=\dfrac{1}{n!}\left(\dfrac{d}{dT}+\dfrac{x}{T}\right)^nT^n(P),$$ so that $\deri_x=S_{1,x}$.
\item[(v)] If $\varphi$ is an $L$-automorphism of an $L$-module $M$ and $k$ an integer, we define
$$\varphi^{(k)}:=\begin{cases}
\overbrace{\varphi\circ\cdots\circ\varphi}^{k-\text{times}} & \ \text{if} \ k>0,\\
{\rm{id}}_M &  \ \text{if} \ k=0,\\
\overbrace{\varphi^{-1}\circ\cdots\circ\varphi^{-1}}^{-k-\text{times}} & \ \text{if} \ k<0.
\end{cases}
$$   
\end{itemize}
\end{notation}

For a given $l\in \Z$, we define the linear map $\varphi_{\alpha,x,l}$ as follows.
\begin{notation} 
$$\varphi_{\alpha,x,l}:=[\alpha]\circ\ev_{\alpha}\circ \inte^{(l)}_x\enspace.$$ 
\end{notation}

\

\

For any non-negative integers $k$, note that $\varphi_{\alpha,x,s}(T^k)$ is a formal analogue of 
$$\dfrac{1}{(s-1)!}\int^{\alpha}_{0}T^{k+x} {\rm{log\,}}^{s-1}\dfrac{1}{T}dT\enspace.$$

\

\

For convenience, we collect below the following elementary facts.

\

\begin{facts}
\label{faitselem}
\begin{itemize} 
\item[(i)] The map $\inte_x$ is an isomorphism and its inverse is $\deri_x$ for $x\in L\setminus\Z_{<0}$.
Hence $\varphi_{\alpha,x,s}$ is well-defined for $s\leq -1$.
\item[(ii)] For any integers $n_1\geq 0, n_2\geq 0$ and $x\in L\setminus\Z_{<0}$ with $x+k$ invertible in $L$ for any $k\in \N$, the divided derivatives $S_{n_1,x}$ and $S_{n_2,x}$ commute, namely $S_{n_1,x}\circ S_{n_2,x}=S_{n_2,x}\circ S_{n_1,x}$.
\item[(iii)]  For any integer $s\in\zu$ and any $\alpha\in L$, we have $\varphi_{\alpha,x,s}\circ\deri_x=\varphi_{\alpha,x,s-1}$.
\item[(iv)] By continuity, all the above mentioned maps extend to $L[[T]]$ with respect to the natural valuation.
\item[(v)] The kernel of the map $\varphi_{\alpha,x,0}$ is the ideal $(T-\alpha)$ for any $x\in L\setminus\Z_{<0}$.
\end{itemize}
\end{facts}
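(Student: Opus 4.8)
The plan is to prove each item of Facts \ref{faitselem} directly from the definitions of the operators $\inte_x$, $\deri_x$, $S_{n,x}$, $\ev_\alpha$, $[\alpha]$ and $\varphi_{\alpha,x,s}$ given in Notation \ref{notationderiprim}. Since everything is $L$-linear, it suffices to verify each identity on the monomial basis $\{T^k\}_{k\ge 0}$, and the whole proof reduces to a handful of bookkeeping computations with the factor $(k+x+1)$.

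\textbf{Items (i) and (iii).} First I would compute $\inte_x(T^k)$ explicitly. By definition $\inte_x(T^k)=T^{-1-x}\int_0^T \xi^{x+k}\,d\xi=\tfrac{1}{k+x+1}\,T^k$, which makes sense precisely because $x\notin\Z_{<0}$ and $x+k$ is invertible in $L$ for all $k\in\N$ (the stated hypothesis); this shows $\inte_x$ is a well-defined $L$-linear map sending $T^k\mapsto (k+x+1)^{-1}T^k$. Dually, $\deri_x(T^k)=T^{-x}\tfrac{d}{dT}(T^{x+k+1})=(k+x+1)T^k$. Hence on each monomial $\inte_x$ and $\deri_x$ act as the mutually inverse scalars $(k+x+1)^{\pm 1}$, so $\inte_x\circ\deri_x=\deri_x\circ\inte_x=\id$; this proves $\inte_x$ is an isomorphism with inverse $\deri_x$. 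Consequently $\inte_x^{(l)}$ is defined for every $l\in\Z$ (using the $k<0$ branch of Notation \ref{notationderiprim}(v) as $\deri_x^{(-l)}$), so $\varphi_{\alpha,x,s}=[\alpha]\circ\ev_\alpha\circ\inte_x^{(s)}$ makes sense for all $s\in\Z$, in particular $s\le -1$, giving (i). For (iii), I would simply compose: $\varphi_{\alpha,x,s}\circ\deri_x=[\alpha]\circ\ev_\alpha\circ\inte_x^{(s)}\circ\deri_x=[\alpha]\circ\ev_\alpha\circ\inte_x^{(s-1)}$, where the last equality uses $\inte_x^{(s)}\circ\deri_x=\inte_x^{(s)}\circ\inte_x^{(-1)}=\inte_x^{(s-1)}$, valid because $\inte_x$ and its inverse commute with their own powers. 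This is exactly $\varphi_{\alpha,x,s-1}$.

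\textbf{Item (ii).} The divided derivative $S_{n,x}$ is defined on monomials by $S_{n,x}(T^k)=\tfrac{(k+x+1)_n}{n!}T^k$, so it too is diagonal in the monomial basis, acting by the scalar $\lambda_{n,k}:=\tfrac{1}{n!}(k+x+1)(k+x+2)\cdots(k+x+n)$. Two diagonal operators always commute, since $S_{n_1,x}\circ S_{n_2,x}(T^k)=\lambda_{n_1,k}\lambda_{n_2,k}T^k=\lambda_{n_2,k}\lambda_{n_1,k}T^k=S_{n_2,x}\circ S_{n_1,x}(T^k)$; the scalars lie in the commutative ring $L$, so their product is order-independent. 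That settles (ii). (One also sees $\deri_x=S_{1,x}$, matching the definition, since $\lambda_{1,k}=k+x+1$.)

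\textbf{Items (iv) and (v).} For (iv), each of $\inte_x$, $\deri_x$, $S_{n,x}$, being diagonal with $T^k\mapsto c_k T^k$ for scalars $c_k$ of controlled growth in $k$, is continuous for the $(T)$-adic valuation on $L[[T]]$ and therefore extends uniquely to the completion $L[[T]]$ by continuity; the evaluation and multiplication maps extend likewise, and composites of continuous maps are continuous, so $\varphi_{\alpha,x,s}$ extends as well. For (v), I would identify $\ker\varphi_{\alpha,x,0}$. Since $\varphi_{\alpha,x,0}=[\alpha]\circ\ev_\alpha\circ\inte_x^{(0)}=[\alpha]\circ\ev_\alpha$ and $[\alpha]$ is multiplication by the nonzero(or at least non-zero-divisor) element $\alpha\in L$, the kernel of $\varphi_{\alpha,x,0}$ coincides with $\ker\ev_\alpha=\{P\in L[T]\mid P(\alpha)=0\}$, which by the factor theorem over the domain $L$ is exactly the principal ideal $(T-\alpha)$; writing $t$ for $T$ this is the ideal $(t-\alpha)$ asserted in (v). The only point requiring the hypothesis $x\notin\Z_{<0}$ here is implicit in $\inte_x^{(0)}=\id$ being the identity regardless of $x$.

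The computations are entirely routine once one notices that $\inte_x$, $\deri_x$ and $S_{n,x}$ are all \emph{simultaneously diagonalized} by the monomial basis. The only genuine subtlety — and the step I would be most careful about — is the well-definedness underlying (i): one must check that the invertibility hypothesis on $x+k$ guarantees the scalars $(k+x+1)^{-1}$ actually exist in $L$ so that $\inte_x$ is literally invertible (not merely formally so), and that extending $\inte_x$ to negative powers $\inte_x^{(s)}$ for $s\le -1$ really recovers $\deri_x^{(-s)}$ consistently; everything else follows by linearity on monomials.
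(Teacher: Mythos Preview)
Your proof is correct and complete. The paper itself does not prove Facts~\ref{faitselem}; it simply introduces them with the phrase ``we collect below the following elementary facts'' and moves on, so there is no approach to compare. Your monomial-basis verification is exactly the kind of routine check the authors had in mind.

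One small caveat on (iv): the diagonal operators $\inte_x$, $\deri_x$, $S_{n,x}$ do extend to $L[[T]]$ just as you say, since $T^k\mapsto c_kT^k$ is $(T)$-adically continuous regardless of the size of $c_k$. However, the evaluation map $\ev_\alpha:L[T]\to L$ does \emph{not} extend to all of $L[[T]]$ by $(T)$-adic continuity in general, since $\alpha^k$ need not tend to $0$ in $L$. In the paper's actual use (equation~(\ref{Lerch integral rep}) and its surroundings), $L$ contains a formal variable $1/z$, the series fed into $\varphi_{\alpha,x,s}$ have coefficients in $(1/z)L[[1/z]]$, and the resulting sum $\sum_k \alpha^{k+1}/((k+x+1)^s z^{k+1})$ converges in the $(1/z)$-adic topology on the target. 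So the extension in (iv) should be understood in that sense rather than as a bald statement about $L[[T]]\to L$; your phrase ``extend likewise'' is a touch quick here, but the paper is equally informal on this point and it causes no trouble downstream.
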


\

Using Fact~\ref{faitselem} (iv), the classical Lerch function is indeed expressed as a natural image by $\varphi_{\alpha,x,s}$ with $s\geq 1$ by
\begin{align} \label{Lerch integral rep}
\varphi_{\alpha,x,s}\left(\dfrac{1}{z-T}\right)=\Phi_s(x,\alpha/z)\enspace.
\end{align}

\

Consider
$\boldsymbol{\alpha}:=(\alpha_1,\ldots,\alpha_m)\in (L\setminus\{0\})^m$ with $\alpha_i\neq \alpha_j$ for $i\neq j$.
We study Pad\'{e} approximants of type  I\hspace{-.1em}I of the functions $(\Phi_{s}(x,\alpha_i/z))_{\substack{1\le i \le m \\ 1\le s \le r}}$.

Let $l$ be a non-negative integer with $0\le l \le rm$. For  a positive integer $n$, we define the family of polynomials:~
\begin{align}
&P_{n,l}(\boldsymbol{\alpha},x|z):=\ev_z\circ S^{(r)}_{n,x}\left(T^l\prod_{i=1}^m(T-\alpha_i)^{rn}\right), \label{Qnl}\\
&P_{n,l,i,s}(\boldsymbol{\alpha},x|z):=\varphi_{\alpha_i,x,s}\left(\dfrac{P_{n,l}(\boldsymbol{\alpha},x|z)-P_{n,l}(\boldsymbol{\alpha},x|T)}{z-T}\right) \ \text{for} \ 1\le i \le m, 1\le s \le r\enspace.\label{Qnlijsj}
\end{align}

\bigskip

Under the notation above, we obtain the following theorem.

\

\begin{theorem} \label{Pade appro Lerch} 
For each $0\leq l\leq rm$, the family of polynomials $(P_{n,l}(\boldsymbol{\alpha},x|z),P_{n,l,i,s}(\boldsymbol{\alpha},x|z))_{\substack{1\le i \le m \\ 1\le s \le r}}$ forms  a Pad\'{e} type approximants system of $(\Phi_{s}(x,\alpha_i/z))_{\substack{1\le i \le m \\ 1\le s \le r}}$,
of
weight $(n,\ldots,n)\in \N^{rm}$ and of degree $rmn+l$.

\end{theorem}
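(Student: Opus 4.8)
The plan is to verify the two defining properties of a Padé type approximation system for the family $(P_{n,l}, P_{n,l,i,s})$: a degree bound on $P_{n,l}$ and an order-of-vanishing condition at $z=\infty$ for each $P_{n,l}(\boldsymbol{\alpha},x|z)\Phi_s(x,\alpha_i/z)-P_{n,l,i,s}(\boldsymbol{\alpha},x|z)$. First I would handle the degree. The polynomial $T^l\prod_{i=1}^m(T-\alpha_i)^{rn}$ has degree $l+rmn$, and since each divided derivative $S_{n,x}$ sends $T^k$ to a scalar multiple of $T^k$, it preserves degree; applying $S_{n,x}^{(r)}$ and then $\ev_z$ therefore yields a polynomial in $z$ of degree exactly $rmn+l=:M$, which gives condition (i) with this $M$. (One should also check $P_{n,l}\neq 0$, which is immediate since the leading coefficient $\prod_{j=1}^{r}\tfrac{(\deg+x+1)_n}{n!}$ is a product of invertible elements of $L$.)

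The heart of the argument is condition (ii). The natural idea is to compute, for fixed $i$ and $s$, the quantity
\[
P_{n,l}(\boldsymbol{\alpha},x|z)\,\Phi_s(x,\alpha_i/z)-P_{n,l,i,s}(\boldsymbol{\alpha},x|z).
\]
Using the integral representation \eqref{Lerch integral rep}, $\Phi_s(x,\alpha_i/z)=\varphi_{\alpha_i,x,s}\bigl(\tfrac{1}{z-T}\bigr)$, so by $L$-linearity of $\varphi_{\alpha_i,x,s}$ and the definition \eqref{Qnlijsj} of $P_{n,l,i,s}$ the difference equals
\[
\varphi_{\alpha_i,x,s}\!\left(\frac{P_{n,l}(\boldsymbol{\alpha},x|z)}{z-T}\right)-\varphi_{\alpha_i,x,s}\!\left(\frac{P_{n,l}(\boldsymbol{\alpha},x|z)-P_{n,l}(\boldsymbol{\alpha},x|T)}{z-T}\right)=\varphi_{\alpha_i,x,s}\!\left(\frac{P_{n,l}(\boldsymbol{\alpha},x|T)}{z-T}\right).
\]
Thus the remainder is $\varphi_{\alpha_i,x,s}$ applied to $P_{n,l}(\boldsymbol{\alpha},x|T)/(z-T)$, which as a formal Laurent series in $1/z$ expands as $\sum_{k\ge 0} z^{-k-1}\varphi_{\alpha_i,x,s}(T^k P_{n,l}(\boldsymbol{\alpha},x|T))$. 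So it remains to show $\varphi_{\alpha_i,x,s}\bigl(T^k P_{n,l}(\boldsymbol{\alpha},x|T)\bigr)=0$ for $0\le k\le n-1$, i.e.\ an order of at least $n+1$. Writing $P_{n,l}(\boldsymbol{\alpha},x|T)=S_{n,x}^{(r)}\bigl(T^l\prod(T-\alpha_i)^{rn}\bigr)$ and unwinding $\varphi_{\alpha_i,x,s}=[\alpha_i]\circ\ev_{\alpha_i}\circ\inte_x^{(s)}$ together with Fact~\ref{faitselem}(iii), $\varphi_{\alpha_i,x,s}\circ\deri_x=\varphi_{\alpha_i,x,s-1}$, I would transfer the $r$-fold divided derivative onto the other factor: since $S_{n,x}$ is built from $\deri_x=S_{1,x}$-type operators and $\inte_x$ is the inverse of $\deri_x$ by Fact~\ref{faitselem}(i), applying $\varphi_{\alpha_i,x,s}$ after $S_{n,x}^{(r)}$ on $T^k\cdot(\cdots)$ can be rewritten (after integrating by parts, formally) as $\varphi_{\alpha_i,x,s'}$ of $\deri_x^{(\text{something})}\bigl(T^{k}\cdot T^l\prod_j(T-\alpha_j)^{rn}\bigr)$; because each $(T-\alpha_i)^{rn}$ vanishes to order $rn\ge n>k$ at $T=\alpha_i$, every term arising from the generalized Leibniz rule still carries a positive power of $(T-\alpha_i)$ and hence is killed by $\ev_{\alpha_i}$, using Fact~\ref{faitselem}(v) that $\ker\varphi_{\alpha_i,x,0}=(T-\alpha_i)$.

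The main obstacle I anticipate is carrying out this "integration by parts" rigorously at the level of the operators $S_{n,x}$, $\inte_x$, $\ev_{\alpha}$: one needs a clean identity expressing $\ev_{\alpha_i}\circ\inte_x^{(s)}\circ S_{n,x}^{(r)}$ in terms of $\ev_{\alpha_i}\circ\inte_x^{(s-r)}$ (or a suitable combination) modulo operators that land in the ideal $(T-\alpha_i)$, keeping careful track that $s-r$ may be negative — which is exactly why Fact~\ref{faitselem}(i) and the well-definedness of $\varphi_{\alpha,x,s}$ for $s\le -1$ were recorded. The commuting property Fact~\ref{faitselem}(ii) of the $S_{n,x}$ is what lets one reorganize the $r$ successive divided derivatives freely. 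Once this operator identity is in hand, the vanishing for $k\le n-1$ and the degree bound together give precisely that $(P_{n,l},P_{n,l,i,s})$ is a Padé type approximation system of weight $(n,\ldots,n)\in\N^{rm}$ (so $N=rmn$) and degree $M=rmn+l$, with $M\ge N$ since $l\ge 0$, completing the proof.
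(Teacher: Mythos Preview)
Your outline matches the paper's proof almost step for step: the degree count, the rewriting of the remainder as $\varphi_{\alpha_i,x,s}\bigl(P_{n,l}(T)/(z-T)\bigr)=\sum_{k\ge 0}\varphi_{\alpha_i,x,s}(T^kP_{n,l}(T))z^{-k-1}$, and the reduction to showing $\varphi_{\alpha_i,x,s}(T^kP_{n,l}(T))=0$ for $0\le k\le n-1$ via operator manipulations and Fact~\ref{faitselem}(v). The ``integration by parts'' obstacle you flag is exactly what the paper resolves, and it does so with two concrete identities in $\mathrm{End}_K(K[T])$: the factorisation $S_{n,x}=S_{1,x}\circ(S_{1,x}+1)\circ\cdots\circ(S_{1,x}+n-1)$ (up to $1/n!$) and the commutation $[T^k]\circ S_{1,x}=(S_{1,x}-k)\circ[T^k]$. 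Combined, these give $[T^k]\circ S_{n,x}^{(r)}=Q(S_{1,x})\circ[T^k]$ with $Q(X)=\tfrac{1}{(n!)^r}\prod_{j=0}^{n-1}(X+j-k)^r$, and the crucial point is that \emph{for $0\le k\le n-1$} the factor $X^r$ divides $Q(X)$; hence $Q(X)=X^sU_{s,k}(X)$ with $\deg U_{s,k}=rn-s$, so $\varphi_{\alpha_i,x,s}(T^kP_{n,l}(T))=\varphi_{\alpha_i,x,0}\bigl(U_{s,k}(S_{1,x})(T^{k+l}\prod_j(T-\alpha_j)^{rn})\bigr)$, and Leibniz finishes it since the operator has order $rn-s<rn$.

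The one genuine imprecision in your sketch is the sentence ``vanishes to order $rn\ge n>k$'': the inequality $k<n$ is \emph{not} what makes the Leibniz step go through (that step only needs the operator order $<rn$, independent of $k$); rather, $k\le n-1$ is used earlier, to guarantee that $X=0$ is a root of $Q$ of multiplicity $r$, i.e.\ that one can peel off $S_{1,x}^{(s)}$ and land at $\varphi_{\alpha_i,x,0}$. If you don't use $k\le n-1$ there, your argument would prove the remainder vanishes identically, which is false. Once you relocate the hypothesis to the commutation step, your proof is complete and coincides with the paper's.
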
 
\begin{proof}
By the definition of $P_{n,l}(\boldsymbol{\alpha},x|z)$, we have
$$
{\rm{deg}}P_{n,l}(\boldsymbol{\alpha},x|z)=rmn+l\,.
$$
Hence the condition on the  degree is verified. We only need  to check the condition on the valuation.

Put $$R_{n,l,i,s}(\boldsymbol{\alpha},x|z):=P_{n,l}(\boldsymbol{\alpha},x|z)\Phi_{s}(x,\alpha_i/z)-P_{n,l,i,s}(\boldsymbol{\alpha},x|z).$$
Then, by  definition of $R_{n,l,i,s}(\boldsymbol{\alpha},x|z)$ with the property $(\ref{Lerch integral rep})$, we obtain

\begin{align}\label{error}
R_{n,l,i,s}(\boldsymbol{\alpha},x|z)&=\displaystyle P_{n,l}(\boldsymbol{\alpha},x|z)\varphi_{\alpha_i,x,s}\left(\dfrac{1}{z-T}\right)-P_{n,l,i,s}(\boldsymbol{\alpha},x|z) \nonumber \\
                                                       &=\displaystyle\varphi_{\alpha_i,x,s}\left(\dfrac{P_{n,l}(\boldsymbol{\alpha},x|T)}{z-T}\right)
                                                       =\sum_{k=0}^{\infty}\varphi_{\alpha_i,x,s}(T^kP_{n,l}(\boldsymbol{\alpha},x|T))\dfrac{1}{z^{k+1}}\enspace.\qquad\qquad  {(7)}
\end{align}

Note that in ${\rm{End}}_K(K[T])$ we have the following identities 
\begin{align*}
&S_{n,x}=\dfrac{1}{n!}S_{1,x}\circ\ldots \circ (S_{1,x}+n-1) \ \text{for} \ n\in \N\enspace,\\
&[T^k]\circ S_{1,x}=(S_{1,x}-k)\circ [T^k] \ \text{for} \ k\in \Z_{\ge0}\enspace.
\end{align*}
By the definition of $P_{n,l}(\boldsymbol{\alpha},x|T)$ and the  identities above, for each $1\le s \le r, \,\,0\le k \le n-1$, there exists a polynomial $U_{s,k}(X)\in \Q[X]$ of\,\, ${\rm{deg}}U_{s,k}=nr-s$, satisfying
$$T^kP_{n,l}(\boldsymbol{\alpha},x|T)=S^{(s)}_{1,x}\circ U_{s,k}(S_{1,x})\left(T^{k+l}\prod_{i=1}^m(T-\alpha_i)^{rn}\right).$$
By the Leibniz rule, we obtain that  $U_{s,k}(S_{1,x})\left(T^{k+l}\prod_{i=1}^m(T-\alpha_i)^{rn}\right)$
belongs to the ideal $ (T-\alpha_i)$ for each $1\le i \le m$.
Hence we get
\begin{align*}
\varphi_{\alpha_i,x,s}(T^kP_{n,l}(\boldsymbol{\alpha},x|T))
                                                                          &=\varphi_{\alpha_i,x,0}\circ U_{s,k}(S_{1,x})\left(T^{k+l}\prod_{i=1}^m(T-\alpha_i)^{rn}\right)=0\enspace,
\end{align*}
for $1\le i \le m, 1\le s \le r$ and $0\le k \le n-1$.

Consequently, by the expansion above of $R_{n,l,i,s}(\boldsymbol{\alpha},x|z)$, we obtain
$$
{\rm{ord}}_{\infty}R_{n,l,i,s}(\boldsymbol{\alpha},x|z)\ge n+1 \ \text{for} \ 1\le i \le m, 1\le s \le r\enspace.
$$
Then Theorem $\ref{Pade appro Lerch}$ follows.
\end{proof}

\section{Metric approximations and linear independence criteria}

We now give a few of the estimates associated with the Pad\'e approximation we just constructed. They do not need involved arguments to be proven; however due to the technical nature of the construction, computations are somewhat heavy and we skip them to keep in line with the spirit of this article. 

The estimates in Lemma \ref{upper coefficients}
can be combined with an appropriate linear independence criterion to provide for a measure.
\begin{lemma} \label{upper coefficients}
Let $n$ be a positive integer, $x$ a rational number with $0\le x<1$ and $\beta\in K\setminus\{0\}$. Then for any $1\le g \le [K:\Q]$, we have

\begin{align*}
\max_{\substack{0\le l \le rm \\ 1\le i \le m \\ 1\le s \le r}}|P^{(g)}_{n,l,i,s}(\boldsymbol{\alpha},x|\beta)|&\le
\max(|\alpha^{(g)}_i|)^{rm}\left(\dfrac{3}{2}\right)^{r^2m+r}\left(\dfrac{3}{2^{rm}}\prod_{j=1}^m\left[2^r|\alpha^{(g)}_{j
}|+3^r\max(|\alpha^{(g)}_{i}|)\right]\right)^{rn} \\
&\times\begin{cases}
\dfrac{\left(\min(|\alpha^{(g)}_i|)^{-1}|\beta^{(g)}|\right)^{rm(n+1)}}{\min(|\alpha^{(g)}_i|)^{-1}|\beta^{(g)}|-1} & \ \text{if} \ \min(|\alpha^{(g)}_i|)^{-1}|\beta^{(g)}|>1\enspace\\
rm(n+1)  & \ \text{if} \ \min(|\alpha^{(g)}_i|)^{-1}|\beta^{(g)}|\le 1\enspace,
\end{cases}
\end{align*}
for $1\le i \le m$.

For the error term, we have$:$~
\begin{align*}
\max_{0\le l \le rm}|R_{n,l,i,s}(\boldsymbol{\alpha},x|\beta)|\le \dfrac{\max_{1\le i \le m}(1,|\alpha_i|)^{rm+1}}{|\beta|-\max_{
j
}(|\alpha_{
j
}|)}\left(\dfrac{3}{2}\right)^{r^2m+r}\left(\dfrac{\max_{
j
}(|\alpha_{
j
}|)^{rm+1}}{|\beta|}\right)^n
 \left(3\left(\dfrac{5}{2}\right)^{rm}\right)^{rn}\enspace.
\end{align*}
\end{lemma}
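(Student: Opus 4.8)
The plan is to estimate each of the two quantities appearing in Lemma~\ref{upper coefficients} directly from the explicit formulas \eqref{Qnl}, \eqref{Qnlijsj} for $P_{n,l}(\boldsymbol{\alpha},x|z)$ and $P_{n,l,i,s}(\boldsymbol{\alpha},x|z)$, together with the residual expansion obtained in the proof of Theorem~\ref{Pade appro Lerch}. First I would control the polynomial $P_{n,l}(\boldsymbol{\alpha},x|z)$ itself: by the definition it is $\ev_z\circ S^{(r)}_{n,x}$ applied to $T^l\prod_{i=1}^m(T-\alpha_i)^{rn}$, a polynomial of degree $rmn+l$. Expanding $\prod_{i=1}^m(T-\alpha_i)^{rn}$ and using that $S_{n,x}$ multiplies $T^k$ by $\tfrac{(k+x+1)_n}{n!}$, the coefficient of $T^k$ in $S^{(r)}_{n,x}(\cdots)$ is the original coefficient times $\bigl(\tfrac{(k+x+1)_n}{n!}\bigr)^{r}$ iterated appropriately; since $0\le x<1$ and $k\le rmn+l$, each factor $\tfrac{(k+x+1)_n}{n!}=\binom{k+x+n}{n}$-type quantity is bounded by a binomial coefficient, and summing over $k$ produces powers of $2$ (whence the $2^r$, $3^r$ and $5/2$ constants after combining with $\prod(|\alpha^{(g)}_j|+|T|)^{rn}$ expanded at $|T|$ of size $\max|\alpha^{(g)}_i|$). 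I would also separately record an archimedean bound for $|P^{(g)}_{n,l}(\boldsymbol{\alpha},x|\beta)|$ evaluated at $\beta$, which is the $g$-th conjugate version of the same estimate.

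Next, for $P_{n,l,i,s}(\boldsymbol{\alpha},x|\beta)=\varphi_{\alpha_i,x,s}\!\left(\tfrac{P_{n,l}(\boldsymbol{\alpha},x|\beta)-P_{n,l}(\boldsymbol{\alpha},x|T)}{\beta-T}\right)$, I would write the divided difference $\tfrac{P(\beta)-P(T)}{\beta-T}=\sum_{k\ge0}c_k(\beta)T^k$ where the $c_k(\beta)$ are partial ``tail sums'' of the coefficients of $P_{n,l}$ multiplied by powers of $\beta$; the geometric-type sum $\sum_k c_k\beta^{?}$ is exactly what produces the case distinction on whether $\min(|\alpha^{(g)}_i|)^{-1}|\beta^{(g)}|$ exceeds $1$ or not (a geometric series versus a linear count $rm(n+1)$ of terms). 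Then I apply $\varphi_{\alpha_i,x,s}=[\alpha_i]\circ\ev_{\alpha_i}\circ\inte_x^{(s)}$: the factor $[\alpha_i]$ contributes $|\alpha^{(g)}_i|$, the evaluation $\ev_{\alpha_i}$ replaces $T$ by $\alpha_i$, and the $s$-fold formal primitive $\inte_x^{(s)}$ sends $T^k$ to $\tfrac{T^{k+s}}{(k+x+1)\cdots(k+x+s)}$ up to the $T^{-1-x}$ prefactor, so it only divides coefficients by quantities of size $\ge1$ and can be dropped in an upper bound (using $s\le r$, $0\le x<1$). Collecting the $m$ factors from $\ev_{\alpha_i}$ acting on $\prod_{j=1}^m(\alpha_i-\alpha_j)^{rn}$-type terms gives $\prod_{j=1}^m(2^r|\alpha^{(g)}_j|+3^r\max(|\alpha^{(g)}_i|))^{rn}$ and the constant $(3/2)^{r^2m+r}$ absorbs the finitely many per-step losses (the $r^2m$ from iterating the degree-$rm$-in-$n$ binomial bound, the extra $r$ from the $s$ primitives).

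For the error term $R_{n,l,i,s}(\boldsymbol{\alpha},x|\beta)=\sum_{k\ge n}\varphi_{\alpha_i,x,s}(T^kP_{n,l}(\boldsymbol{\alpha},x|T))\,\beta^{-(k+1)}$, the crucial input is that the terms with $0\le k\le n-1$ vanish (already proven in Theorem~\ref{Pade appro Lerch}), so the sum starts at $k=n$, which is the source of the factor $\bigl(\max_j|\alpha_j|^{rm+1}/|\beta|\bigr)^n$. Here I would bound $|\varphi_{\alpha_i,x,s}(T^kP_{n,l}(\boldsymbol{\alpha},x|T))|$ crudely: $\varphi_{\alpha_i,x,s}$ evaluates at $\alpha_i$ after primitives, so $T^kP_{n,l}$ at $T=\alpha_i$ is bounded by $|\alpha_i|^k$ times the coefficient-sum bound for $P_{n,l}$, and $|\alpha_i|\le\max_j|\alpha_j|$ combines with the $T^k\mapsto$ coefficient growth; then $\sum_{k\ge n}(\max_j|\alpha_j|/|\beta|)^{k}$ is a geometric tail giving the denominator $|\beta|-\max_j|\alpha_j|$ and the numerator $\max_i(1,|\alpha_i|)^{rm+1}$ after accounting for the degree $rmn+l$ of $P_{n,l}$ (so that the coefficient bound itself carries a further $\max_j|\alpha_j|^{rmn}$, absorbed into the $n$-th power together with the $(3(5/2)^{rm})^{rn}$ constant from the binomial/$x$-denominator factors). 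The main obstacle I expect is purely bookkeeping: tracking the constants through the $r$-fold iteration of $S_{n,x}$ and the $s$-fold $\inte_x$ so that they close up into exactly $(3/2)^{r^2m+r}$ and $(3(5/2)^{rm})^{rn}$ rather than something larger; the case distinction in the first estimate and the vanishing of low-order terms in the second are conceptually clean once the divided-difference formalism is set up, so no single step is deep, but the archimedean-conjugate bookkeeping (replacing $|\alpha_i|$ by $|\alpha^{(g)}_i|$ consistently and keeping $\beta$ versus $\beta^{(g)}$ straight) must be done carefully.
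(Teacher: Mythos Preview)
The paper does not actually prove Lemma~\ref{upper coefficients}: it states explicitly that the estimates ``do not need involved arguments'' but that the computations are ``somewhat heavy'' and are therefore skipped (with details deferred to \cite{DHK1}, \cite{DHK2}). So there is no proof in the paper to compare against; what you outline---direct coefficient bounds on $S_{n,x}^{(r)}\bigl(T^l\prod_i(T-\alpha_i)^{rn}\bigr)$ via binomial-type inequalities, the divided-difference expansion of $P_{n,l,i,s}$ leading to the geometric/linear dichotomy, and the tail sum $\sum_{k\ge n}$ for $R_{n,l,i,s}$---is exactly the kind of elementary bookkeeping the paper is alluding to, and is the natural route.

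One small correction that does not affect your bounds: $\inte_x$ sends $T^k$ to $T^k/(k+x+1)$ (the degree is preserved, not raised), so $\inte_x^{(s)}(T^k)=T^k/(k+x+1)^s$ and $\varphi_{\alpha_i,x,s}(T^k)=\alpha_i^{k+1}/(k+x+1)^s$, consistent with \eqref{Lerch integral rep}. Your conclusion that the primitive only divides by quantities $\ge 1$ (since $k\ge 0$, $0\le x<1$) is still correct, so the upper bounds go through unchanged. The only real work, as you anticipate, is the constant-tracking needed to land on the specific constants $(3/2)^{r^2m+r}$, $2^r|\alpha_j^{(g)}|+3^r\max|\alpha_i^{(g)}|$, and $(3(5/2)^{rm})^{rn}$; the paper gives no hint on how these particular combinations arise, so you should expect some ad hoc regrouping of binomial sums at the end.
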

We give here an outline of the proof. 
By $(\ref{Qnl})$ and $(\ref{Qnlijsj})$, we have
\begin{align*}
&P_{n,l}(\boldsymbol{\alpha},x|z)=\sum_{k=0}^{rmn} \left[\sum_{\substack{ 1\le i \le m\\ 0\le k_i \le rn \\ \sum_{i}k_i=k}}\left(\prod_{i=1}^m\binom{rn}{k_i}(-\alpha_i)^{rn-k_i}\right)\right]\left(\dfrac{(k+l+x+1)_n}{n!}\right)^{r}z^{k+l},\\
&P_{n,l,i,s}(\boldsymbol{\alpha},x|z)=\sum_{u=\max(l,1)-1}^{rmn+l-1}\left[\sum_{k=u+1}^{rmn+l}\left(\sum_{\substack{ 1\le {i^{\prime} }\le m\\ 0\le {k_{i^{\prime}}} \le rn \\ {\sum_{i^{\prime}}k_{i^{\prime}}}=k-l}}{\prod_{i^{\prime}=1}^m}{\binom{rn}{k_{i^{\prime}}}}(-{\alpha_{i^{\prime}}})^{rn-{k_{i^{\prime}}}}\right) \times\left(\dfrac{(1+k+x)_n}{n!}\right)^{r}\dfrac{\alpha^{k-u}_i}{(k-u)^{s}}\right]z^{u}.
\end{align*}
By the above equalities together with the triangle inequality, we obtain the upper bound for 
$|P^{(g)}_{n,l,i,s}(\boldsymbol{\alpha},x|\beta)|$  
{and $|P_{n,l}(\boldsymbol{\alpha},x|\beta)|$}.
For the term $|R_{n,l,i,s}(\boldsymbol{\alpha},x|\beta)|$, we use {$(\ref{error})$.  \sout{for}}

\bigskip

We then state a general linear independence criterion:~
\begin{proposition} \label{critere version II} 
Let $K$ be an algebraic number field of finite degree over $\Q$. We denote the completion of $K$ with respect to the fixed embedding $\iota_{\infty}$ by $K_{\infty}$.
Let $m\in\N$ and $\theta_0:=1,\theta_1,\ldots,\theta_{m}\in \C\setminus\{0\}$. 
\\
Suppose that there exists a set of matrices $$\{(A_{n,l,j})_{0\le l,j\le m}\}_{n\in \N}\subset {\rm{M}}_{m+1}(\mathcal{O}_K)\cap {\rm{GL}}_{m+1}(K)\,.$$ 
Assume further that there exist
positive real numbers
$$
\{\mathcal{A}^{(g)}\}_{\substack{1\le g \le [K:\Q]}},$$
and a positive real number $\mathbb{A},
$
satisfying the conditions$:$~

\begin{align}\label{upper Anrj}
\max_{0\le l,j \le m}|A^{(g)}_{n,l,j}|\le e^{\mathcal{A}^{(g)}\cdot n+o(n)} \ \text{for} \ 1 \le g \le [K:\Q]\  \  \
(n\rightarrow \infty),{\qquad (8)}
\end{align} 

\begin{align}\label{upper Rjn}
\max_{\substack{0\le l \le m \\ 1\le j \le m}} |A_{n,l,0}\cdot \theta_j-A_{n,l,j}|\le e^{-\mathbb{A}\cdot n+o(n)}\  \  \
(n\rightarrow \infty).\qquad{\qquad (9)}
\end{align}
We put 
$$
V:=\mathbb{A}+\mathcal{A}^{(1)}-\dfrac{\sum_{g=1}^{[K:\Q]}\mathcal{A}^{(g)}}{[K_{\infty}:\R]}\,.
$$
If  $V>0$, then the numbers ${\theta_0},\ldots,\theta_m$  are linearly independent over $K$. 
\end{proposition}
\begin{proof} 
Assume that there exists a vector $\boldsymbol{\beta}:=(\beta_0,\ldots,\beta_m)\in \mathcal{O}_K\setminus \{\bold{0} \}$ 
satisfying $\Lambda(\boldsymbol{\beta},\boldsymbol{\theta}):={\displaystyle\sum_{i=0}^m}\beta_i\theta_i=0$.
For $n\in\N$, as we have ${\rm{det}}(A_{n,l,j})_{0\le l,j \le m}\neq 0$, there exists $0\le l_n \le m$ satisfying 
\begin{align*} 
B_{l_n}:=\sum_{j=0}^{m}A_{n,l_n,j}\beta_j\neq 0.
\end{align*}
Put $R_{n,l,j}=A_{n,l,0}\theta_j-A_{n,l,j}$ for $1\le j \le m$ and $0\le l \le m$.
Then by the definitions of $\Lambda(\boldsymbol{\beta}, \boldsymbol{\theta})$, $B_{l_n}$, and $R_{n,l,j}$, we obtain
$$ 
0=A_{n,l_n,0}\Lambda(\boldsymbol{\beta}, \boldsymbol{\theta})=B_{l_n}+\sum_{j=1}^{m}R_{n,l_n,j}\beta_j.
$$
Using the product formula for $B_{l_n}\in \mathcal{O}_K\setminus\{0\}$, it follows that
\begin{align} \label{upper infty}
1&\le {{\prod_{g}}^{\prime}} |B^{(g)}_{l_n} | \times |B_{l_n}|^{[K_{\infty}:\R]}=
{{\prod_{g}}^{\prime}} |B^{(g)}_{l_n} | \times \left|\sum_{j=1}^{m}R_{n,l_n,j}\beta_j\right|^{[K_{\infty}:\R]}.
\end{align}
Here ``$ \ {}^{\prime} \ $'' in {${\prod_{g}}^{\prime}$},
$g$ runs $2\le g \le [K:\Q]$ if $K_{\infty}=\R$ and $3\le g \le [K:\Q]$ if $K_{\infty}=\C$.
Firstly, we look for an upper bound of $|B^{(g)}_{l_n}|$ for $g\neq 1$ if $K_{\infty}=\R$ and $g\neq 1,2$ if $K_{\infty}=\C$. 
\\
Using inequality {$(\ref{upper Anrj})$}, we have 
\begin{align} \label{upper Brn}
|B^{(g)}_{l_n}|\le e^{\mathcal{A}^{(g)}n+o(n)} \ (n\to\infty). 
\end{align}
Secondly, we give an upper bound for $\left|\sum_{j=1}^{m}R_{n,l_n,j}\beta_j\right|$. 
By {$(\ref{upper Rjn})$}, we get
\begin{align} \label{upper Brn iota}
\left|\sum_{j=1}^{m}R_{n,l_n,j}\beta_j\right|&\le e^{-\mathbb{A}n+o(n)} \ (n\to \infty).
\end{align}
Substituting the inequalities $(\ref{upper Brn})$ and $(\ref{upper Brn iota})$ into inequality $(\ref{upper infty})$,
taking the $1/[K_{\infty}:\R]$-th power of the inequality, we obtain
\begin{align*} 
1&\le e^{-Vn+o(n)} \ (n\to \infty). 
\end{align*}
Since $V>0$, we arrive at a contradiction for this inequality for all sufficiently large $n\in \N$.
\end{proof}

\

Theorem~\ref{Pade appro Lerch} gives us the sequence of  matrices. The growth control of the size of the matrices to carry out the approximations is provided for in Lemma~\ref{upper coefficients}.  However, the matrices do
not always
have algebraic integer entries. This is not a big deal. The defect of integrality comes from our operators $\inte_x,\deri_x$ and it  is corrected by multiplying by a suitable power of the least common multiple $d_n:={\rm l.\,c.\,m.}(1,\ldots,n)$ which is standard in the theory.

Plugging in these estimates in Proposition \ref{critere version II} leads us to the proof of the
main theorem. The metric condition requiring the numbers to be sufficiently close to the origin, is translated to the condition $V>0$ in {the linear independence criterion (Proposition \ref{critere version II}).} 

However, there is still a significant step to be performed. Now we need to prove that the matrices coming from the Pad\'e  approximation are indeed invertible. We describe this main step in the next section.

\section{Non-vanishing of a determinant and the final step of the proof}
In this section, we use the following notation.
Let $m, r$ be positive integers and $K$ be a field with characteristic $0$. We assume that $\alpha_1,\ldots,\alpha_m, z,T$ all belong to the set of variables $I$, so our ring $L$ contains $K[\alpha_i,z,T,1/\alpha_i,1/z,1/T]$. Put  $\boldsymbol{\alpha}:=(\alpha_1,\ldots,\alpha_m)$.

For a positive integer $l$ with $0\le l \le rm$, and for $x\in K$, we put
\begin{align*}
&P_{n,l}(z):=P_{n,l}(\boldsymbol{\alpha},x|z)\enspace,\\
&P_{n,l,i,s}(z):=P_{n,l,i,s}(\boldsymbol{\alpha},x|z) \ \text{for} \ 1\le i \le m, 1\le s \le r\enspace.
\end{align*}
The polynomials in the right-hand sides above have been already defined in $(\ref{Qnl})$ and $(\ref{Qnlijsj})$ respectively.

We define a column vector $\vec{p}_{n,l}(z)\in K[z]^{rm+1}$ by
\begin{align*}
&\vec{p}_{n,l}(z):={}^t\Biggl(P_{n,l}(z),
{P_{n,l,1,1}(z),\ldots, P_{n,l,1,r}(z)}, \ldots, {P_{n,l,m,1}(z),\ldots, P_{n,l,m,r}(z)}\Biggr).
\end{align*}

\

\begin{proposition} \label{non zero det}
We use the same notation as above. For any positive integer $n$, we have
$$
\Delta_n(z):=
{\rm{det}}  {\begin{pmatrix}
\vec{p}_{n,0}(z) \ \cdots \ \vec{p}_{n,rm}(z)
\end{pmatrix}}
\in K{{(\alpha_1,\alpha_2,\ldots,\alpha_m)}}\setminus\{0\}\enspace. 
$$
\end{proposition}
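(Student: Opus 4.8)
The plan is to show $\Delta_n(z)$ is a nonzero element of the rational function field by combining two observations: first, that $\Delta_n(z)$ does not actually depend on $z$ (so it lies in $K(\alpha_1,\ldots,\alpha_m)$), and second, that it is not the zero function, which I would check by evaluating the determinant at a well-chosen specialization (e.g. letting a variable tend to a degenerate value, or reading off a leading coefficient). The first point follows from the Pad\'e structure: by Theorem~\ref{Pade appro Lerch}, for each $l$ the residuals $R_{n,l,i,s}(z)=P_{n,l}(z)\Phi_s(x,\alpha_i/z)-P_{n,l,i,s}(z)$ vanish to order $\ge n+1$ at $z=\infty$. Writing the vector identity $\vec{p}_{n,l}(z)=M(z)\,\vec{q}_{n,l}(z)$, where $M(z)$ is the (invertible over the field of Laurent series) matrix with first column ${}^t(1,\Phi_1(x,\alpha_1/z),\ldots)$ and the rest of the identity block, and $\vec{q}_{n,l}(z)={}^t(P_{n,l}(z),-R_{n,l,1,1}(z),\ldots)$, one gets $\Delta_n(z)=\det M(z)\cdot\det(\vec q_{n,0}(z)\ \cdots\ \vec q_{n,rm}(z))$. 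Since $\det M(z)=1$ (it is lower/upper triangular with $1$'s on the diagonal after reordering), $\Delta_n(z)=\det(\vec q_{n,l}(z))_{l}$; each such $\vec q_{n,l}$ has polynomial first entry and entries in $1/z\cdot L[[1/z]]$ otherwise, and a degree/order count shows the determinant is a polynomial in $z$ of degree $0$, hence constant in $z$.

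Granting that $\Delta_n$ is independent of $z$, I would then compute it by taking $z\to\infty$. Concretely, $P_{n,l}(z)$ has degree $rmn+l$ with an explicit leading coefficient (coming from $S_{n,x}^{(r)}$ applied to $T^{rmn+l}$, which multiplies $T^{rmn+l}$ by $\prod_{\nu=1}^r\binom{rmn+l+x+\nu\cdot(\text{shift})}{\cdots}$-type Pochhammer factors), and each $P_{n,l,i,s}(z)$ has strictly smaller degree, namely $\le rmn+l-1$ with controlled top coefficients. Extracting the coefficient of the appropriate power of $z$ in each row and exploiting that the weight vector is constant $(n,\ldots,n)$, the determinant should collapse to a product of nonzero Pochhammer symbols times a Vandermonde-type factor in the $\alpha_i$ (nonzero because the $\alpha_i$ are pairwise distinct) times powers of the $\alpha_i$ (nonzero because $\alpha_i\ne 0$). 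Here the hypothesis that $x+k$ is invertible in $L$ for all $k\in\N$ is exactly what guarantees none of the Pochhammer factors $(k+x+1)_n$ vanishes.

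An alternative, perhaps cleaner, route to non-vanishing: show that the $rm+1$ vectors $\vec p_{n,0}(z),\ldots,\vec p_{n,rm}(z)$ are linearly independent over $K(\alpha_1,\ldots,\alpha_m)$ directly. If $\sum_l c_l\vec p_{n,l}(z)=0$ with $c_l\in K(\boldsymbol\alpha)$, then looking at the first coordinate gives $\sum_l c_lP_{n,l}(z)=0$, and applying the same relation to the residuals $\sum_l c_lR_{n,l,i,s}(z)=0$; but $\sum_l c_lP_{n,l}(z)=\ev_z\circ S_{n,x}^{(r)}\big(\sum_l c_lT^l\cdot\prod_i(T-\alpha_i)^{rn}\big)$, and since $S_{n,x}^{(r)}$ is injective (it is a composition of the automorphisms $S_{1,x}+j$, invertible by the hypothesis on $x+k$) and multiplication by $\prod_i(T-\alpha_i)^{rn}$ is injective, this forces $\sum_l c_lT^l=0$, i.e. all $c_l=0$ since $\{T^l\}_{0\le l\le rm}$ are linearly independent.

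\textbf{Main obstacle.} The delicate point is the first step: rigorously justifying that $\Delta_n(z)$ is a polynomial of degree $0$ in $z$. One must carefully track, for each of the $rm+1$ columns, both the degree of the polynomial first entry ($rmn+l$) and the order of vanishing at infinity of the residuals ($\ge n+1$), and verify that the alternating-sum expansion of the determinant has its potentially-highest-degree terms cancel so that no positive power of $z$ survives, while simultaneously no negative power survives (the determinant is a priori in $L[z][[1/z]]$, and one needs it to actually land in $L[z]$, equivalently in $L$). The cleanest way around this is precisely the factorization $\Delta_n(z)=\det M(z)\cdot\det(\vec q_{n,l}(z))$ with $\det M(z)=1$: then one only argues about the single matrix $(\vec q_{n,l}(z))$, whose columns have first entry of degree exactly $rmn+l$ and whose other entries are $O(1/z^{n+1})$ — a transparent degree count then yields a nonzero constant, and I would combine this with the explicit leading-coefficient computation above (or with the linear-independence argument) to see the constant is nonzero.
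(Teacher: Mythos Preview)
Your first step---showing $\Delta_n(z)$ is constant in $z$ via the factorization $\Delta_n(z)=\det M(z)\cdot\det(\vec q_{n,l}(z))$ with $\det M(z)=1$ and then a degree/order count---is correct and is exactly what the paper does (phrased there as row operations and an expansion along the first row). The paper likewise isolates the single surviving term: only $l=rm$ contributes to the constant, giving $\Delta_n=(-1)^{rm}$ times the leading coefficient of $P_{n,rm}$ times the coefficient of $1/z^{rm(n+1)}$ in the $rm\times rm$ cofactor built from the $R_{n,l,i,s}$.

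Your \emph{alternative} route, however, is a genuine gap. Showing that $\sum_l c_l\vec p_{n,l}(z)=0$ with $c_l\in K(\boldsymbol\alpha)$ forces all $c_l=0$ only establishes that the columns are $K(\boldsymbol\alpha)$-linearly independent inside the infinite-dimensional space $K(\boldsymbol\alpha)[z]^{rm+1}$; it does \emph{not} show the determinant is nonzero. For that you need linear independence over $K(\boldsymbol\alpha)(z)$, i.e.\ you must allow the $c_l$ to be rational functions of $z$. (Toy counterexample: the columns ${}^t(1,z)$ and ${}^t(z,z^2)$ are $K$-independent but have zero determinant.) Your first-coordinate trick---that the $P_{n,l}(z)$ have pairwise distinct degrees $rmn+l$---rules out constant coefficients only.

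In your main route the real work is the non-vanishing of the resulting constant, and here ``should collapse to Pochhammer $\times$ Vandermonde $\times$ powers of $\alpha_i$'' is precisely the nontrivial part you have not done. The paper proves this in several stages: first it rewrites the constant as $(-1)^{rmn}\bigl((1+rmn+rm+x)_n/n!\bigr)^r$ times an $rm\times rm$ determinant $\psi(P_{n,n})$ built from the operators $\varphi_{\alpha_i,t_{i,s},x,s}$ (this uses commutation identities between $[T^k]$ and $S_{n,x}$); then it proves a factorization $\psi(P_{u,n})=c_{n,u,m}\prod_i\alpha_i^{\ast}\prod_{i_1<i_2}(\alpha_{i_2}-\alpha_{i_1})^{(2n+1)r^2}$ by a homogeneity-plus-divisibility argument; and finally---the crux---it shows $c_{n,u,m}\neq 0$ by specializing $\alpha_m=0$ to obtain a recursion $c_{n,u,m}=\pm(\text{one-variable factor})\cdot c_{n,u+r(n+1),m-1}$, reducing everything to an explicit integral over $[0,1]^r$ whose non-vanishing is checked directly. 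Your ``extract the leading coefficient'' plan reaches the same $rm\times rm$ determinant but gives no mechanism for seeing that determinant is nonzero; the Pochhammer factors coming from $S_{n,x}^{(r)}$ are the easy part, and the Vandermonde-in-$\alpha_i$ structure together with the nonzero absolute constant require the inductive/integral argument the paper supplies.
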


\

To prove this, we firstly prove that the determinant $\Delta_n=\Delta_n(z)$ is a constant,  i.~e.~is independent of $z$.
Secondly, we regard $\Delta_n$ as an element of $K(\alpha_1,\ldots,\alpha_m)$ viewing  $\alpha_1,\ldots,\alpha_m$ as
indeterminates, and factor it up to a constant depending only on $n, m, r$. We finally show that this absolute constant
$\Delta_n$ is non-zero.
For this last step, we  identify this determinant with a certain {real} integral  to show that it does not vanish.

\

We shall prove:
$$
\Delta_n(z)\in K{{(\alpha_1,\ldots,\alpha_m)}} \ \text{for all} \ n\in \N\enspace. 
$$

\

For this, denote $P_{n,l}(z)\Phi_{s}(x,\alpha_i/z)-P_{n,l,i,s}(z)$ by $R_{n,l,i,s}(z)$  as above ($0\le l \le rm$, $1\le i \le m$, $1\le s \le r$).

In the matrix giving the determinant $\Delta_n(z)$, we add, the first row multiplied by the $\Phi_{s}(x,\alpha_i/z)$, 
to the $(i-1)r+s+1$-th row ($1\le i \le m$, $1\le s \le r$), to obtain
                     $$ 
                     \Delta_n(z)=(-1)^{rm}{\rm{det}}
                     {\begin{pmatrix}
                     P_{n,0}(z) & \dots &P_{n,rm}(z)\\
                     R_{n,0,1,1}(z) & \dots & R_{n,rm,1,1}(z)\\
                     \vdots    & \ddots & \vdots  \\
                     R_{n,0,1,r}(z) & \dots & R_{n,rm,1,r}(z)\\
                     \vdots & \ddots & \vdots\\
                     R_{n,0,m,1}(z) & \dots & R_{n,rm,m,1}(z)\\
                     \vdots    & \ddots & \vdots  \\
                     R_{n,0,m,r}(z) & \dots & R_{n,rm,m,r}(z)\\
                     \end{pmatrix}}\enspace. 
                     $$ 
We denote   by $\Delta_{n,s,t}(z)$, the $(s,t)$-th cofactor of the matrix in the right-hand side of the identity above.
Then we have, developing along the first row
\begin{align} \label{formal power series rep delta}
\Delta_n(z)=(-1)^{rm}\left(\sum_{l=0}^{rm}P_{n,l}(z)\Delta_{n,1,l+1}(z)\right)\enspace.
\end{align} 
Since we have 
\begin{align*}
{\rm{ord}}_{\infty} R_{n,l,i,s}(z)\ge n+1 \ \text{for} \ 0\le l \le rm, \ 1\le i\le m \ \text{and} \ 1\le s \le r\enspace,
\end{align*}
we get
$$
{\rm{ord}}_{\infty}\Delta_{n,1,l+1}(z)\ge (n+1)rm\enspace.
$$
Combining the fact ${\rm{deg}}P_{n,l}(z)=rmn+l$ with the lower bound of ${\rm{ord}}_{\infty}\Delta_{n,1,l+1}(z)$ above, we obtain
$$
P_{n,l}(z)\Delta_{n,1,l+1}(z)\in 1/z\cdot K[[1/z]] \ \text{for} \ 0\le l \le rm-1\,,
$$
and
$$
P_{n,rm}(z)\Delta_{n,1,rm+1}(z)\in K[[1/z]].
$$
Note that in the  relation above, the constant term of $P_{n,rm}(z)\Delta_{n,1,rm+1}(z)$ is 
$$
``\text{Coefficient of} \ z^{rm(n+1)} \ \text{of} \ P_{n,rm}(z)"\times ``\text{Coefficient of} \ 1/z^{rm(n+1)} \ \text{of} \ \Delta_{n,1,rm+1}(z)".
$$ 
Thus by $(\ref{formal power series rep delta})$, the determinant $\Delta_n(z)$ is a polynomial in $z$ with non-positive valuation with respect to ${\rm{ord}}_{\infty}$, consequently it turns to be a constant. 
Moreover, the terms of  strictly negative valuation should be canceled out. Hence we have
\begin{align}\label{cal delta n}
\Delta_n=\Delta_n(z)&=(-1)^{rm}\kern-3pt\times\kern-4pt\left(\sum_{l=0}^{rm}P_{n,l}(z)\Delta_{n,1,l+1}(z)\right)\nonumber\\
                           &=(-1)^{rm}\kern-3pt\times\kern-3pt ``\text{constant term of}\,  P_{n,rm}(z)\Delta_{n,1,rm+1}(z)" \in K.\enspace
\end{align}
%

\vspace{\baselineskip}

We now need to rewrite $\Delta_n$ as a rational function of $\alpha_1,\ldots,\alpha_m$ in a workable way.
We further extend the set of variables and assume that the set $I$ contains the $rm$ variables $t_{i,s}, 1\leq i\leq m, 1\leq s\leq r$, so that $L$ contains 
$$K[\alpha_1,\ldots,\alpha_m, z,T,1/\alpha_1,\ldots,1/\alpha_m,1/z,1/T][t_{i,s}].$$

For each variable $t_{i,s}$ and any integer $l$, we have a well-defined map for $\alpha\in L$:
\begin{equation*}
\varphi_{\alpha,t_{i,s}, x,l}:L[t_{i,s}]_{1\leq i\leq m,1\leq s\leq r}\longrightarrow L[{t_{i^{\prime},s^{\prime}}]_{(i^{\prime},s^{\prime})\neq (i,s)}}, \ \ t^k_{i,s}\mapsto 
\dfrac{{\alpha^{k+1}}}{(k+x+1{)^{{l}}}}.
\end{equation*}
Since  $L[t_{i,s}]_{1\leq i\leq m,1\leq s\leq r}$ can be regarded as a polynomial ring in one variable 
${L^{\prime}}[t_{i,s}]$ over ${L^{\prime}}
=L[{t_{i^{\prime},s^{\prime}}]_{(i^{\prime},s^{\prime})\neq (i,s)}}$.
  

\

Now for a positive integer $n$ and an integer $l$ with $ 0\le l \le rm$, we put 
$$
A_{n,l}(T):=T^l\prod_{i=1}^m(T-\alpha_i)^{rn}\enspace.
$$
By the definition of $A_{n,l}(T)$, we have $P_{n,l}(z)=\ev_z\circ S^{(r)}_{n,x}(A_{n,l}(T)).$

\ 

Let us define a column vector $\vec{r}_{n,l}\in L^{rm}$ by
{\footnotesize{
\begin{align*}
&\vec{r}_{n,l}:=\\
&{}^t\Biggl(\varphi_{\alpha_1,t_{1,1}, x,1}(t^n_{1,1}A_{n,l}(t_{1,1})), \ldots, \varphi_{\alpha_1,t_{1,r}, x, r}(t^n_{1,r}A_{n,l}(t_{1,r})), \ldots, \varphi_{\alpha_m,t_{m,1}, x,1}(t^n_{m,1}A_{n,l}(t_{m,1})), \ldots, \varphi_{\alpha_m,t_{m,r}, x, r}(t^n_{m,r}A_{n,l}(t_{m,r}))\Biggr)\enspace.
\end{align*}}}
\begin{lemma} \label{another presentation}
Under the notation above, we obtain the identity:~
$$ 
\Delta_n=(-1)^{rmn}{\left(\dfrac{(1+rmn+rm+x)_n}{n!}\right)^r }
{\rm{det}}  {\begin{pmatrix}
\vec{r}_{n,0} \
\cdots \
\vec{r}_{n,rm-1}
\end{pmatrix}}\enspace.
$$ 
\end{lemma}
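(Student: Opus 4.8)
The plan is to reduce the computation of the constant $\Delta_n$ to the evaluation of a determinant of the vectors $\vec r_{n,l}$, by carefully tracking what the operators $\ev_z\circ S^{(r)}_{n,x}$ and $\varphi_{\alpha_i,x,s}$ do to the polynomials $A_{n,l}(T)$. Starting from \eqref{cal delta n}, $\Delta_n=(-1)^{rm}$ times the constant term of $P_{n,rm}(z)\Delta_{n,1,rm+1}(z)$; by the remark after \eqref{formal power series rep delta} this constant term factors as the leading coefficient of $P_{n,rm}(z)$ (the coefficient of $z^{rm(n+1)}$) times the coefficient of $z^{-rm(n+1)}$ of the cofactor $\Delta_{n,1,rm+1}(z)$. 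So the first step is to identify each of these two quantities separately.

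First I would compute the leading coefficient of $P_{n,rm}(z)=\ev_z\circ S^{(r)}_{n,x}A_{n,rm}(T)$. Since $A_{n,rm}(T)=T^{rm}\prod_i(T-\alpha_i)^{rn}$ has degree $rm(n+1)$ and $S_{n,x}$ sends $T^k$ to $\tfrac{(k+x+1)_n}{n!}T^k$, applying $S_{n,x}$ $r$ times multiplies the top-degree coefficient by $\bigl(\tfrac{(rm(n+1)+x+1)_n}{n!}\bigr)^r=\bigl(\tfrac{(1+rmn+rm+x)_n}{n!}\bigr)^r$, and the leading coefficient of $A_{n,rm}$ itself is $1$; this produces exactly the scalar prefactor appearing in the statement. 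Second I would interpret the coefficient of $z^{-rm(n+1)}$ in the cofactor $\Delta_{n,1,rm+1}(z)$. That cofactor is an $rm\times rm$ determinant whose entries are the residuals $R_{n,l,i,s}(z)=\sum_{k\ge 0}\varphi_{\alpha_i,x,s}(T^kP_{n,l}(\boldsymbol\alpha,x|T))z^{-k-1}$ for $0\le l\le rm-1$. Each such series has ${\rm ord}_\infty\ge n+1$, so the minimal total order of the $rm\times rm$ product is exactly $rm(n+1)$, and the coefficient of $z^{-rm(n+1)}$ is obtained by taking in each entry only the lowest-order term, i.e. the $k=n$ term $\varphi_{\alpha_i,x,s}(T^nP_{n,l}(\boldsymbol\alpha,x|T))$. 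One then has to rewrite $\varphi_{\alpha_i,x,s}(T^nP_{n,l}(\boldsymbol\alpha,x|T))$: using $T^nP_{n,l}(\boldsymbol\alpha,x|T)=T^n\,\ev\text{-free part of }S^{(r)}_{n,x}A_{n,l}(T)$ together with Fact~\ref{faitselem} and the commutation identities $[T^k]\circ S_{1,x}=(S_{1,x}-k)\circ[T^k]$, push the $T^n$ through, and recognize that applying $\varphi_{\alpha_i,x,s}$ after the remaining divided derivatives is — after a change of the dummy derivation variable from the shared $T$ to the individual $t_{i,s}$, which is legitimate because $\varphi$ only sees the polynomial structure — exactly $\varphi_{\alpha_i,t_{i,s},s}(t_{i,s}^nA_{n,l}(t_{i,s}))$ up to a sign depending on $n$, $r$, $m$. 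The powers of $S_{n,x}$ acting on $A_{n,l}$ versus on $T^nA_{n,l}$ differ only by a shift that gets absorbed; the $(-1)^{rmn}$ records the sign coming from the $rn$ factors $(T-\alpha_i)$ and the reindexing.

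Assembling these two computations and restoring the sign $(-1)^{rm}$ from \eqref{cal delta n} gives the claimed identity. The main obstacle I anticipate is bookkeeping: precisely matching the operator $S^{(r)}_{n,x}$ applied to $T^n A_{n,l}$ with the operators $\varphi_{\alpha_i,t_{i,s},s}$ built from $\inte_x^{(s)}$, and making sure every sign, every Pochhammer shift, and the passage from the single variable $T$ to the $rm$ separate variables $t_{i,s}$ are accounted for consistently. In particular one must check that the $k=n$ truncation of each $R_{n,l,i,s}$ genuinely picks out only $\varphi_{\alpha_i,x,s}(T^nP_{n,l}(\boldsymbol\alpha,x|T))$ with no contamination from higher $k$ (this uses that the $rm\times rm$ minor has minimal order exactly $rm(n+1)$, which follows because the $l=0,\dots,rm-1$ columns give degrees $rmn,\dots,rmn+rm-1$ and hence the orders add without slack) and that the remaining determinant of the $\vec r_{n,l}$ over $l=0,\dots,rm-1$ is precisely what survives. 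Once the dictionary between the two families of operators is set up cleanly via Fact~\ref{faitselem}(iii) and Notation~\ref{notationderiprim}, the rest is a direct substitution into \eqref{cal delta n}.
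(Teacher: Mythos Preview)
Your outline follows the paper's route: start from \eqref{cal delta n}, split the constant as (leading coefficient of $P_{n,rm}$) $\times$ (coefficient of $z^{-rm(n+1)}$ in the cofactor), compute the first factor as the Pochhammer term, and recognise the second as the $rm\times rm$ determinant with entries $\varphi_{\alpha_i,x,s}(T^nP_{n,l}(T))$, $0\le l\le rm-1$. Up to here your plan is fine and matches the paper.

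The gap is in the passage from $\varphi_{\alpha_i,x,s}(T^nP_{n,l}(T))$ to $\varphi_{\alpha_i,x,s}(T^nA_{n,l}(T))$. You assert that these agree ``up to a sign'', i.e.\ entrywise. They do not. Pushing $[T^n]$ past $S^{(r)}_{n,x}$ does \emph{not} produce a scalar: one has
\[
[T^n]\circ S^{(r)}_{n,x}=\sum_{k=0}^{rn}e_{n,k}\,S^{(k)}_{1,x}\circ[T^n],\qquad e_{n,0}=(-1)^{rn},
\]
a genuine polynomial of degree $rn$ in $S_{1,x}$. Composing with $\varphi_{\alpha_i,x,s}$ and using $\varphi_{\alpha_i,x,s}\circ S^{(k)}_{1,x}=\varphi_{\alpha_i,x,s-k}$ (Fact~\ref{faitselem}(iii)) gives
\[
\varphi_{\alpha_i,x,s}(T^nP_{n,l}(T))=\sum_{k=0}^{s-1}e_{n,k}\,\varphi_{\alpha_i,x,s-k}(T^nA_{n,l}(T)),
\]
the terms with $k\ge s$ vanishing because $S^{(k-s)}_{1,x}(T^nA_{n,l}(T))$ is still divisible by $(T-\alpha_i)$ (one differentiates at most $rn-1$ times a factor $(T-\alpha_i)^{rn}$) and $\ker\varphi_{\alpha_i,x,0}=(T-\alpha_i)$. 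So for each fixed $i$ the change from the rows $(\varphi_{\alpha_i,x,s}(T^nA_{n,l}))_s$ to the rows $(\varphi_{\alpha_i,x,s}(T^nP_{n,l}))_s$ is \emph{lower triangular} with constant diagonal $e_{n,0}=(-1)^{rn}$; only after taking determinants does the relation collapse to a single scalar. This triangularity, interpreted as row operations leaving the determinant unchanged up to the diagonal factor, is the crux of the paper's argument and is what your write-up is missing. In particular, the sign does not come from ``the $rn$ factors $(T-\alpha_i)$ and the reindexing'' but from $e_{n,0}$, and the renaming $T\mapsto t_{i,s}$ is purely cosmetic and plays no analytic role. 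Once you insert this triangularity step, the rest of your plan goes through.
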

\begin{proof}
Using $(\ref{cal delta n})$, we calculate constant term of $P_{n,rm}(z)\Delta_{n,1,rm+1}(z)\in K[[1/z]]$.

We need to deal with the non-commutativity of the multiplication by $[T]$ and the morphisms $S_{n,x}^{(k)}$. The defect of 
the commutativity is given by the following identity:~
there exists a set of rational numbers $\{e_{n,k}\}_{0\le k \le rn}\subset \qu$ with $e_{n,0}=(-1)^{rn}$ and
$$ 
[T^n]\circ S^{(r)}_{n,x}=\sum_{k=0}^{rn}e_{n,k}S^{(k)}_{1,x}\circ [T^n]\enspace.
$$
Then we obtain  
\begin{align*}
\varphi_{\alpha_i,x,s}(T^nP_{n,l}(T))&=\sum_{k=0}^{rn}e_{n,k}\varphi_{\alpha_i,x,s}\circ S^{(k)}_{1,x}\circ[T^n](A_{n,l}(T))\\ 
                                         &=\sum_{k=0}^{s-1}e_{n,k}\varphi_{\alpha_{i,x,s-k}}\circ[T^n](A_{n,l}(T))+\sum_{k=s}^{rn}e_{n,k}\varphi_{\alpha_i,x,0}\circ S^{(k-s)}_{1,x}\circ[T^n](A_{n,l}(T))\nonumber\\
                                         &=\sum_{k=0}^{s-1}e_{n,k}\varphi_{\alpha_{i,x,s-k}}(T^nA_{n,l}(T)), 
\end{align*}
for $1\le i \le m$ and $1\le s \le r$, the conclusion follows, interpreting the above relations as linear manipulations of lines
and columns leaving the determinant unchanged. 
\end{proof}

Now, for non-negative integers $u, n$, we put:
$$P_{u,n}(t_{i,s})=\prod_{i=1}^{m}\prod_{s=1}^r\left[ t_{i,s}^u\prod_{j=1}^m(t_{i,s}-\alpha_j)^{rn}\right]\prod_{(i_1,s_1)<(i_2,s_2)}(t_{i_2,s_2}-t_{i_1,s_1}),$$
where the order $(i_1,s_{1})<(i_2,s_{2})$ follows lexicographically.

\

By $\bigcirc$, we denote the composite of morphisms.
When no confusion is deemed to occur, we omit the subscripts $\underline{\alpha}=(\alpha_1,\ldots,\alpha_m)$ and write
$$\psi=\psi_{\underline{\alpha}}:=\bigcirc_{i=1}^{m}\bigcirc_{s=1}^r\varphi_{\alpha_i,t_{i,s},x,s}\enspace.$$

Note that, by definition of 
$
{\rm{det}}  {\begin{pmatrix}
\vec{r}_{n,0} \
\cdots \
\vec{r}_{n,rm-1}
\end{pmatrix}},
$
we have
$$
{\rm{det}}  {\begin{pmatrix}
\vec{r}_{n,0} \
\cdots \
\vec{r}_{n,rm-1}
\end{pmatrix}}=\psi(P_{n,n})\enspace.
$$
Let $u$ be a non-negative integer. We are going to study the value
$$
C_{n,u,m}:=\psi(P_{u,n})\,.
$$
By induction, we obtain the following proposition.
\begin{proposition} \label{decompose Cnum}
There exists a non-zero constant $c_{n,u,m}\in K$  satisfying
$$
C_{n,u,m}=c_{n,u,m}\prod_{i=1}^m\alpha^{r(u+1)+r^2n+\binom{r}{2}}_i \prod_{1\le i_1<i_2\le m}(\alpha_{i_2}-\alpha_{i_1})^{(2n+1)r^2},
$$
with $\displaystyle\binom{r}{2}=0$ if $r=1$. 
\end{proposition}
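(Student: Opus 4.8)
Here is how I would approach Proposition~\ref{decompose Cnum}.

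The plan is to evaluate $C_{n,u,m}=\psi(P_{u,n})$ as a determinant over the polynomial ring $K[\alpha_1,\dots,\alpha_m]$, to peel off the asserted monomial and Vandermonde factors by divisibility, to check a degree bound forcing the remaining cofactor to be a scalar, and finally to prove that scalar non‑zero; the last point is the hard one. First I would record, from Notation~\ref{notationderiprim} and Fact~\ref{faitselem}, that $\inte_x(T^k)=T^k/(k+x+1)$, hence $\inte_x^{(s)}(T^k)=T^k/(k+x+1)^s$ and $\varphi_{\alpha,x,s}(T^k)=\alpha^{k+1}/(k+x+1)^s$. Writing the Vandermonde factor of $P_{u,n}$ as the determinant $\det\!\big(t_{i,s}^{\,k-1}\big)_{(i,s);\,1\le k\le rm}$ and using that the operators $\varphi_{\alpha_i,t_{i,s},x,s}$ act on pairwise disjoint variables, multilinearity of the determinant gives
\[
C_{n,u,m}=\det\!\Big(\varphi_{\alpha_i,t_{i,s},x,s}\big(g_k(t_{i,s})\big)\Big)_{(i,s);\,1\le k\le rm},\qquad g_k(T):=T^{\,u+k-1}\prod_{j=1}^m(T-\alpha_j)^{rn}.
\]
Since $\varphi_{\alpha_i,t_{i,s},x,s}(g_k)=\sum_{\ell}g_{k,\ell}\,\alpha_i^{\ell+1}/(\ell+x+1)^s$ when $g_k=\sum_\ell g_{k,\ell}T^\ell$, this is a polynomial in $\alpha_1,\dots,\alpha_m$ over $K$; more precisely, writing $\prod_j(T-\alpha_j)^{rn}=\sum_q c_q(\boldsymbol{\alpha})T^q$, one has $\varphi_{\alpha_i,t_{i,s},x,s}(g_k)=\alpha_i^{\,u+k}\,\nu_{i,s,k}$ with $\nu_{i,s,k}=\sum_q c_q(\boldsymbol{\alpha})(u+k+q+x)^{-s}\alpha_i^{\,q}$.

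Next I would extract three sets of factors in the UFD $K[\alpha_1,\dots,\alpha_m]$. $(\mathrm a)$ The constant term $c_0=\prod_j(-\alpha_j)^{rn}$ is divisible by $\alpha_i^{\,rn}$, and in fact every monomial of $c_q\,\alpha_i^{\,q}$ carries $\alpha_i$ to a power $\ge rn$, so $\alpha_i^{\,rn}\mid\nu_{i,s,k}$ uniformly in $s,k$; factoring $\alpha_i^{\,u+1+rn}$ out of each of the $r$ rows of block $i$ yields $\prod_i\alpha_i^{\,r(u+1)+r^2n}\mid C_{n,u,m}$. $(\mathrm b)$ In the resulting matrix the row $(i,s)$ has $k$‑th entry $\alpha_i^{\,k-1}\widetilde{\nu}_{i,s,k}$ with $\widetilde{\nu}_{i,s,k}\in K[\boldsymbol{\alpha}]$; in each term of the Leibniz expansion the $r$ rows of block $i$ together contribute $\alpha_i$ to the power $\sum_s(\sigma(i,s)-1)\ge 0+1+\dots+(r-1)=\binom r2$ (injectivity of $\sigma$), all other factors being polynomial in $\alpha_i$; hence $\prod_i\alpha_i^{\binom r2}$ divides as well. $(\mathrm c)$ For a pair $i_1<i_2$, $g_k$ is symmetric in $\alpha_{i_1}\leftrightarrow\alpha_{i_2}$, so on $\{\alpha_{i_1}=\alpha_{i_2}\}$ the rows $(i_1,s)$ and $(i_2,s)$ coincide for every $s$, giving vanishing of order $\ge r$; the sharp order $(2n+1)r^2$ comes from a repeated confluence analysis along that hyperplane, most transparently run on the integral representation
\[
\varphi_{\alpha_i,t_{i,s},x,s}(g_k)=\frac{\alpha_i^{\,u+k+rn}}{(s-1)!}\int_0^1 v^{\,u+k-1+x}\Big(\log\tfrac{1}{v}\Big)^{s-1}(v-1)^{rn}\prod_{j\ne i}(\alpha_i v-\alpha_j)^{rn}\,dv
\]
(which follows from $\inte_x^{(s)}Q(t)=\tfrac{1}{(s-1)!}\int_0^1 v^x(\log\tfrac{1}{v})^{s-1}Q(tv)\,dv$): setting $\alpha_{i_1}=\alpha_{i_2}+\varepsilon$ makes each of the $2r$ rows of blocks $i_1,i_2$ a polynomial in $\varepsilon$ of controlled degree, and an Andr\'eief‑type antisymmetrisation of the ensuing iterated integral over $(0,1)^{2r}$ kills the coefficients of $\varepsilon^{0},\dots,\varepsilon^{(2n+1)r^2-1}$. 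The linear polynomials $\alpha_i$ and $\alpha_{i_2}-\alpha_{i_1}$ being pairwise non‑associate primes, $(\mathrm a)$–$(\mathrm c)$ force $\prod_i\alpha_i^{\,r(u+1)+r^2n+\binom r2}\prod_{i_1<i_2}(\alpha_{i_2}-\alpha_{i_1})^{(2n+1)r^2}\mid C_{n,u,m}$; a count of the $\alpha_i$‑degrees of the entries of the determinant shows $\deg_{\alpha_i}C_{n,u,m}$ does not exceed the $\alpha_i$‑degree of this product, so the cofactor $c_{n,u,m}$ has degree $0$ in each variable, i.e. $c_{n,u,m}\in K$.

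Finally, the non‑vanishing of $c_{n,u,m}$. For $m=1$ the computation above gives $c_{n,u,1}=\det\!\big(W^{(k)}_s\big)_{1\le s,k\le r}$ with $W^{(k)}_s=\sum_{q=0}^{rn}(-1)^{rn-q}\binom{rn}{q}(u+k+q+x)^{-s}$; the partial‑fraction identity $\sum_q(-1)^{N-q}\binom Nq(y+q)^{-1}=N!\,/\,\big(y(y+1)\cdots(y+N)\big)$ identifies this, up to the nonzero scalars $(-1)^{s-1}/(s-1)!$, with $\det\!\big(f^{(s-1)}(u+k+x)\big)_{1\le s,k\le r}$, where $f(y)=\int_0^1 v^{y-1}(1-v)^{rn}\,dv$, and the Andr\'eief identity rewrites it as
\[
\pm\frac{1}{r!}\int_{(0,1)^r}\Big(\prod_{a<b}(\log v_b-\log v_a)\Big)\det\!\big(v_a^{\,u+k+x-1}\big)_{1\le k,a\le r}\prod_{a=1}^r(1-v_a)^{rn}\,dv_1\cdots dv_r,
\]
whose integrand keeps a fixed sign on each chamber $v_{\tau(1)}<\dots<v_{\tau(r)}$ by total positivity of $(v,y)\mapsto v^{y}$ and monotonicity of $\log$; hence $c_{n,u,1}\ne0$. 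For general $m$ one propagates non‑vanishing through the induction — the passage from $m-1$ to $m$ introduces, after applying the block‑$m$ operators, exactly the factor $\alpha_m^{\,r(u+1)+r^2n+\binom r2}\prod_{i<m}(\alpha_m-\alpha_i)^{(2n+1)r^2}$ together with a nonzero scalar — or, equivalently, identifies $c_{n,u,m}$ with the nonzero leading coefficient of an analogous iterated integral of Andr\'eief type. I expect the genuine obstacle to be precisely this last point: establishing the \emph{exact} order of vanishing in $(\mathrm c)$ and the non‑degeneracy of the limiting determinant, i.e. the precise combinatorial identity behind the factorisation rather than the divisibilities alone — and it is there that the integral representation is indispensable.
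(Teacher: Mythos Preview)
Your overall architecture coincides with the paper's own outline: both arguments establish divisibility of $C_{n,u,m}\in K[\alpha_1,\ldots,\alpha_m]$ by $\prod_i\alpha_i^{\,r(u+1)+r^2n+\binom r2}$ and by $\prod_{i_1<i_2}(\alpha_{i_2}-\alpha_{i_1})^{(2n+1)r^2}$, match this against a degree/homogeneity bound to force the quotient to be a scalar, and then argue non-vanishing of that scalar via the integral representation of $\varphi_{1,x,s}$. Your treatment of the $\alpha_i$-divisibility in (a) and (b) is correct and slightly more explicit than the paper's sketch; for (c), the paper also only announces the result for $m=2$ and defers the details, so your honest flagging of this as the hard step is accurate.

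The one place where the paper supplies a concrete device that you do not is the induction on $m$ for non-vanishing. Rather than ``applying the block-$m$ operators'' and asserting that the correct factor drops out, the paper divides by the $\alpha$-power part to form $D_{n,u,m}=C_{n,u,m}/\prod_i\alpha_i^{\,r(u+1)+r^2n+\binom r2}$, rewrites it via the integral model with each $\alpha_i$ rescaled to $1$, and then \emph{specialises} $\alpha_m=0$. This specialisation cleanly decouples the block $i=m$ from the rest and yields the recursion
\[
c_{n,u,m}\;=\;\pm\,\Bigl(\bigcirc_{s'=1}^{r}\varphi_{1,x,s'}\!\Bigl(\prod_{s=1}^{r}t_s^{\,u}(t_s-1)^{rn}\prod_{s_1<s_2}(t_{s_2}-t_{s_1})\Bigr)\Bigr)\cdot c_{\,n,\,u+r(n+1),\,m-1},
\]
the crucial point being the \emph{shift} $u\mapsto u+r(n+1)$ produced by the extra factor $\prod_s t_{i,s}^{rn}\cdot\prod_{(i,s)<(m,s')}(-\alpha_i t_{i,s})$ absorbed into the lower-$m$ integrand. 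Iterating gives $c_{n,u,m}$ as a product over $i=1,\ldots,m$ of single-block integrals at parameters $u+(i-1)r(n+1)$, each of which is exactly the $m=1$ integral you analyse with Andr\'eief and sign-definiteness. Your inductive sentence gestures at this but does not isolate the $\alpha_m=0$ specialisation or the parameter shift, and without them the ``nonzero scalar'' you invoke is not identified; this is where your sketch would need to be sharpened to become a proof.
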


\

We write the details of the proof of the proposition in the forthcoming articles \cite{DHK1} \cite{DHK2}, however, we describe here our basic idea.
Indeed, we prove the proposition by {reducing to the case $m=2$ and }showing:
\begin{itemize}
\item[(i)] $C_{n,u,2}$ is homogeneous of degree $2r(u+1)+2r^2n+2{r\choose 2}+(2n+1)r^2$.
\item[(ii)] $(\alpha_1\alpha_2)^{r(u+1)+r^2n+{r\choose 2}}$ divides $C_{n,u,2}$.
\item[(iii)] $(\alpha_1-\alpha_2)^{(2n+1)r^2}$ divides $C_{n,u,2}$.
\end{itemize}

\

Here, we explain how the constant $c_{n,u,m}$ in Proposition $\ref{decompose Cnum}$ becomes non-zero. 
Whenever it is shown, then the determinant does not vanish.
\

We use the same notation as those in Proposition $\ref{decompose Cnum}$.
Define $$D_{n,u,m}:=\dfrac{C_{n,u,m}}{\prod_{i=1}^m\alpha^{r(u+1)+r^2n+\binom{r}{2}}_i}=c_{n,u,m}\times\prod_{1\le i_1<i_2\le m}(\alpha_{i_2}-\alpha_{i_1})^{(2n+1)r^2}.$$ 
A straightforward calculation of an integral gives us
\begin{align*}
D_{n,u,m}=&{\bigcirc_{i^{\prime}=1}^{m}\bigcirc_{s^{\prime}=1}^r\,\,\varphi_{1,{t_{i^{\prime},s^{\prime}}},x,s^{\prime}}}
\Biggl(\prod_{i=1}^m\prod_{s=1}^{r} \left[t^{u}_{i,s} \cdot (t_{i,s}-1)^{rn} \cdot \prod_{\substack{\tilde{i}\neq i\\ 1\le \tilde{i} \le m}}(\alpha_i t_{i,s}-\alpha_{\tilde{i}})^{rn}\right]\\
&\times\prod_{i=1}^m\left(\prod_{s_{1}<s_{2}}(t_{i,s_{2}}-t_{i,s_{1}})\right)\times
{\prod_{i_1<i_2}\kern3pt\prod_{\substack{1\leq
s_1,\,s_2\leq r}}}
(\alpha_{i_2}t_{i_2,s_{2}}-\alpha_{i_1}t_{i_1,s_{1}})\Biggr).
\end{align*}
We substitute $\alpha_m=0$ in $D_{n,u,m}$, then we have
{\small{
\begin{align*}
&\left. D_{n,u,m}\right|_{\alpha_m=0}=c_{n,u,m}\prod_{i=1}^{m-1}(-\alpha_i)^{(2n+1)r^2}\prod_{1\le i_1<i_2\le m-1}(\alpha_{i_2}-\alpha_{i_1})^{(2n+1)r^2}\\
&=\pm\prod_{i=1}^{m-1}\alpha^{(2n+1)r^2}_i {\bigcirc_{s^{\prime}=1}^{r}\,\,\varphi_{1, {t_{m,s^{\prime}}},x,s^{\prime}}}\left({\prod_{s=1}^{r}} \left[t^{u}_{m,s} \cdot (t_{m,s}-1)^{rn}\right] \times{\prod_{\substack{1\leq
s_1<s_2\leq r}}}
(t_{m,s_{2}}-t_{m,s_{1}})\right) \\
&\times {\bigcirc_{i^{\prime}=1}^{m-1}\bigcirc_{s^{\prime}=1}^{r}\,\,\varphi_{1, {t_{i^{\prime},s^{\prime}}},x,s^{\prime}}}\Biggl(\prod_{i=1}^{m-1}{\prod_{s=1}^{r}} \left[t^{u+r(n+1)}_{i,s} \cdot (t_{i,s}-1)^{rn} \cdot \prod_{\substack{\tilde{i}\neq i\\ 1\le \tilde{i} \le m-1}}(\alpha_i t_{i,s}-\alpha_{\tilde{i}})^{rn}\right]\\
&\times\prod_{i=1}^{m-1}\left({\prod_{\substack{1\leq
s_1<s_2\leq r}}}
({t_{i,s_{2}}}-t_{i,s_{1}})\right)\times 
{\prod_{1\le i_1<i_2\le m-1}\kern3pt\prod_{\substack{1\leq
s_1,\,s_2\leq r}}}
(\alpha_{i_2}t_{i_2,s_{2}}-\alpha_{i_1}t_{i_1,s_{1}})\Biggr)\\
&=\pm\prod_{i=1}^{m-1}\alpha^{(2n+1)r^2}_i {\bigcirc_{s^{\prime}=1}^{r}\,\,\varphi_{1, {t_{m,s^{\prime}}},x,s^{\prime}}}\left({\prod_{s=1}^{r}} \left[t^{u}_{m,s} \cdot (t_{m,s}-1)^{rn}\right] \times{\prod_{\substack{1\leq
s_1<s_2\leq r}}}(t_{m,s_{2}}-t_{m,s_{1}})\right) \\
&\times c_{n,u+r(n+1),m-1}\prod_{1\le i_1<i_2\le m-1}(\alpha_{i_2}-\alpha_{i_1})^{(2n+1)r^2}.
\end{align*}}}
Thus we obtain
\begin{align*} 
c_{n,u,m}&=\pm{\bigcirc_{s^{\prime}=1}^{r}\,\,\varphi_{1, {t_{m,s^{\prime}}},x,s^{\prime}}}\left({\prod_{s=1}^{r}} \left[t^{u}_{m,s} \cdot (t_{m,s}-1)^{rn}\right] \times{\prod_{\substack{1\leq
s_1<s_2\leq r}}}(t_{m,s_{2}}-t_{m,s_{1}})\right)c_{n,u+r(n+1),m-1}\\ 
&=\pm \prod_{i=1}^{m}\left({\bigcirc_{s^{\prime}=1}^{r}\,\,\varphi_{1,t_{s'},x,s^{\prime}}}\left(\prod_{s=1}^{r} 
 \left[t^{u+(i-1)r(n+1)}_{s} \cdot (t_{s}-1)^{rn}\right] \times{\prod_{\substack{1\leq
s_1<s_2\leq r}}}(t_{s_{2}}-t_{s_{1}})
\right)\right). 
\end{align*}
We are then in a position to conclude.
Indeed, using the definition of the operators $\varphi_{1,t_s,x,s}$, the composition of these operators is nothing but an integral over $[0,1]^r$. More precisely,
\begin{align*}
&{\bigcirc_{s^{\prime}=1}^{r}\,\,\varphi_{1,t_{s'},x,s^{\prime}}}\left({\prod_{s=1}^{r}} \left[t^{u}_{s} \cdot (t_{s}-1)^{rn}\right] \cdot {\prod_{\substack{1\leq
s_1<s_2\leq r}}}(t_{s_{2}}-t_{s_{1}})\right)\\
&={\prod_{s^{\prime}=1}^{r}}\dfrac{1}{{(s^{\prime}-1)!}}\int^1_{0}\cdots\int^1_{0}{\prod_{s=1}^{r}} \left[t^{u+x}_{s} (t_{s}-1)^{rn}{\rm{log\,}}^{s-1}\dfrac{1}{t_{s}}\right] \cdot {\prod_{\substack{1\leq
s_1<s_2\leq r}}}(t_{s_{2}}-t_{s_{1}})\prod_{s=1}^{r}dt_{s}\,,
\end{align*}
then a direct computation enables us to show this last integral does not vanish,
which yields Proposition $\ref{non zero det}$.

The statement of Theorem \ref{Lerch} now follows from Proposition $\ref{critere version II}$, since the determinant is a non-vanishing algebraic constant.

\section{Examples}

We show here three examples of linearly independent polylogarithms, which are  shown by our criterion.
\begin{example}
Put $r=m=10$ and $x=0$. Take $\boldsymbol{\alpha}:=(1,1/2,\ldots,1/10)$ and $\beta=b$ with $|b|\ge e^{2715}$. Then we have $D(\boldsymbol{\alpha},b)=d_{10}=2520$. 
Since we have the inequalities:
\begin{align*}
{\rm{log\,}}2520<7.84, \ {\rm{log\,}}3<1.10, \ {\rm{log\,}}(5/2)<0.92,\,\,\,
\end{align*}
we have
$$\,\,\,{\rm{log\,}}|b|>100(10+{\rm{log\,}}2520+10\,{\rm{log\,}}(5/2))+10\,{\rm{log\,}}3\,.$$
Then the $10^{2}+1$ numbers 
\begin{align*}
1, {\rm{Li}}_1(1/b),\ldots, {\rm{Li}}_{10}(1/b),\ldots, {\rm{Li}}_1(1/(10b)),\ldots, {\rm{Li}}_{10}(1/(10b)),
\end{align*}
are linearly independent over $\Q$. 
\end{example}
\begin{example}
Let $k\ge 2$ be an integer, set $r=m=10^k$, $x=0$. Take $\boldsymbol{\alpha}:=(j)_{1\le j\le 10^k}$ and $\beta=b\in \Z$ with $|b|\ge {\rm{exp}}(2\cdot 10^{3k})$. 
Since $k\ge 2$, we can easily verify 
\begin{align*}
{\rm{log\,}}|b|&>(rm+1){\rm{log\,}}10^k+\left(r^2m(1+{\rm{log\,}}(5/2))+r{\rm{log\,}}3\right)\\
              &=k(10^{2k}+1){\rm{log\,}}10+\left(10^{3k}(1+{\rm{log\,}}(5/2))+10^k{\rm{log\,}}3\right)\enspace.
\end{align*}
Then 
the $10^{2k}+1$ numbers 
\begin{align*} 
1, {\rm{Li}}_1(1/b),\ldots, {\rm{Li}}_{10^k}(1/b),\ldots, {\rm{Li}}_1(10^k/b),\ldots, {\rm{Li}}_{10^k}(10^k/b)\enspace,
\end{align*}
are linearly independent over $\Q$. For instance, we take $r=m=10^4$ and $b=3^{2\cdot 10^{12}}$ then the $10^{8}+1$ numbers
\begin{align*}
1, {\rm{Li}}_1(1/3^{2\cdot 10^{12}}),\ldots, {\rm{Li}}_{10^4}(1/3^{2\cdot 10^{12}}),\ldots, {\rm{Li}}_1(10^4/3^{2\cdot 10^{12}}),\ldots, {\rm{Li}}_{10^4}(10^4/3^{2\cdot 10^{12}}),
\end{align*}
are all linearly independent over $\Q$. 
\end{example}
{\begin{example}
Let $M\ge 5$ be a natural number. Define the polynomial $$f_M(X):=\left(2+\dfrac{1}{M}\right)X^2-2X+\dfrac{2}{M}.$$ Then $X=(M\pm \sqrt{M^2-4M-2})/(2M+1)$ are roots of $f_M(X)$.
Put  $\beta:=(2M+1)/(M-\sqrt{M^2-4M-2})$, $K:=\Q(\beta)$ and $\delta:=e^{7908}$. 
We take $r=m=10$, $\boldsymbol{\alpha}:=(1,1/2,\ldots,1/10)$ and $$M>\dfrac{2\delta^2-\delta+1+\sqrt{4\delta^4+4\delta^3-3\delta^2-6\delta+5}}{4\delta-4}.$$
Then we have 
\begin{align*}V(\boldsymbol{\alpha},\beta,0)=\mathbb{A}(\boldsymbol{\alpha},\beta,0)-\mathcal{A}^{(2)}(\boldsymbol{\alpha},\beta,0)>{\rm{log}}|\beta|-7908>0.
\end{align*}
Thus by Theorem $\ref{Lerch}$, the $10^2+1$ numbers
\begin{align*} 
1, {\rm{Li}}_1(1/\beta),\ldots, {\rm{Li}}_{10^k}(1/\beta),\ldots, {\rm{Li}}_1(1/10\beta),\ldots, {\rm{Li}}_{10^k}(1/10\beta)\enspace,
\end{align*}
are linearly independent over $K$.
For example,we take $M\ge e^{15817}$, 
 the $10^2+1$ numbers
\begin{align*} 
1, {\rm{Li}}_1(1/\beta),\ldots, {\rm{Li}}_{10^k}(1/\beta),\ldots, {\rm{Li}}_1(1/10\beta),\ldots, {\rm{Li}}_{10^k}(1/10\beta)\enspace,
\end{align*}
are linearly independent over $K$.
\end{example}} 

\bigskip

{\bf Acknowledgement}\\
We sincerely thank the referee for comments and precise
references which helped us a lot in our comparison with previous results. {This work is
partly supported by JSPS KAKENHI Grant no. 18K03225 and also by
the Research Institute for Mathematical
Sciences, an International Joint Usage/Research Center located in Kyoto
University.}

\bibliography{}

\begin{thebibliography}{999}%



\bibitem{Baker1975}
A.~Baker, {\it Transcendental Number Theory},  
Cambridge Univ. Press, 1975.



\bibitem{ch2}
G.~V.~Chudnovsky,
{\it Pad\'e approximations to the generalized hypergeometric functions I},
J. Math. Pures et Appl.,  \textbf{58}, (1979),  445--476.

\bibitem{ch3}
G.~V.~Chudnovsky,
{\it Measures of irrationality, transcendence and algebraic independence, Recent progress},
London Math. Soc. Lecture Notes series,  \textbf{56}, Cambridge Univ. Press, (1982),  11--82.

\bibitem{ch9}
G.~V.~Chudnovsky,
{\it On the method of Thue-Siegel},
Annals of Math., \textbf{117}, (1983),  325--382.

\bibitem{ch10}
G.~V.~Chudnovsky,
{\it On applications of Diophantine approximations},
Proc. Natl. Acad. Sci. USA \textbf{81} (1984), 7261--7265. 

\bibitem{Chubrothers}
D.~V.~Chudnovsky and G.~V.~Chudnovsky,
{\it Applications of Pad\'e approximations to diophantine inequalities in values of $G$-functions},
in: Number Theory, New York 1983--84, eds. D.~V.~Chudnovsky, G.~V.~Chudnovsky, H.~Cohn,  M.~B.~Nathanson,
Lecture Notes in Math., \textbf{1135},  Springer, (1985),
9--51.

\bibitem{DHK1}
S.~David, N.~Hirata-Kohno  and M.~Kawashima,
{\it Linear forms in polylogarithms},
preprint.

\bibitem{DHK2}
S.~David, N.~Hirata-Kohno  and M.~Kawashima,
{\it Linear independence criterion of the Lerch functions with different shifts at distinct algebraic points},
preprint.

\bibitem{fe1}
N.~I.~Fel'dman,
{\it Improved estimate for a linear form of logarithms of algebraic numbers},
Mat. Sb., \textbf{77}, (1968), 256--270; English transl. in Math.\ USSR Sbornik, \textbf{6} (1968), 393--406.

\bibitem{FN}
N. I. Fel'dman and Yu. V. Nesterenko (authors),  A. N. Parshin and I. R. Schfarevich (eds.),
Number Theory IV, Encyclopaedia of Mathematical Sciences Vol. 44,
1998.

\bibitem{FSZ}
{S.~Fischler, J.~Sprang and W.~Zudilin,
{\it Many odd zeta values are irrational}, Compositio Math.,
\textbf{155},  (2019) , 938--952. }

\bibitem{Fli}  Y. Z. Flicker, {\it
On $p$-adic $G$-functions},
J. London Math. Soc. (2) \textbf{15},  n$^{\mathrm o}$. 3,  (1977), 395--402.

\bibitem{G1}
A.~I.~Galochkin,
{\it Estimates from below of polynomials in the values of analytic functions of a certain class},
Mat. Sb., \textbf{95 (137)}, no. 3, (1974), 396--417; English transl. in Math.\ USSR Sbornik, \textbf{24}, no. 3,  (1974), 385--407.

\bibitem{G2}
A.~I.~Galochkin,
{\it Lower bounds of  linear forms of values of $G$-functions},
Mat. Zametki, \textbf{18}, no. 4, (1975), 541--552; English transl. in Math. Note, \textbf{18} (1975), 911--917.

\bibitem{Ha}
M.~Hata,
{\it On the linear independence of the values of polylogarithmic functions},
J. Math. Pures et Appl.,  \textbf{69}, (1990),  133--173.

\bibitem{Ha1993}
M.~Hata,
{\it Rational approximations to the dilogarithms},
Trans. Amer. Math. Soc., \textbf{336}, \ no.~1,  (1993),  363--387.

\bibitem{H-I-W}
N.~Hirata-Kohno, M.~Ito and Y.~Washio,
{\it A criterion for the linear independence of polylogarithms over a number field},
RIMS Kokyouroku Bessatu, \textbf{64}, (2017), 3--18.

\bibitem{H-K-S}
M.~Hirose, M.~Kawashima and N.~Sato,
{\it A lower bound of the dimension of the vector space spanned by the special values of certain functions},
Tokyo J. ~Math., \textbf{40},\ no.~2, (2017), 439--479.

\bibitem{Ka}
M.~Kawashima,
{\it Evaluation of the dimension of the $\Q$-vector space spanned by the special values of the Lerch function},
Tsukuba J. ~Math. \textbf{38},\ no.~2, {(2014)}, 171--188.


\bibitem{marc}
{R.~Marcovecchio,
{\it Linear independence of forms in polylogarithms},
Ann.  Scuola Nor.  Sup. Pisa CL. Sci., \textbf{5}, (2006), 1--11.}

\bibitem{Mi} 
{M.~A.~Miladi,
{\it R\'ecurrences lin\'eaires et approximations simultan\'ees de type Pad\'e: applications \`a l'arithm\'etiqus},
Th\`ese, Universit\'e des S. et T. de Lille, 2001.}


\bibitem{N}
E.~M.~Niki\v{s}in,
{\it On irrationality of the values of the functions $F(x,~s)$},
Math.\ USSR Sbornik, \textbf{37}, \ no.~3,  (1980),  381--388 (originally published in Mat, Sb., \textbf{109}, \ no.~3,  (1979)).


\bibitem{N0}
E.~M.~Niki\v{s}in,
{\it On logarithms of natural numbers},
Math.\ USSR Izvestia, \textbf{15}, \ no.~3,  (1980),  523--530 (originally published in Izv. Akad. Nauk., \textbf{43}, \ no.~6,  (1979)).


\bibitem{Nurma} M. S. Nurmagomedov,  {\it The arithmetical properties of the values of $G$-functions},  
Vestnik Moskov Univ. Ser. I Mat. Meh., \textbf{26},  n$^{\mathrm o}$. 6, (1971), 79--86.

\bibitem{R-T}
G.~Rhin and  P.~Toffin,
{\it Approximants de Pad\'{e} simultan\'{e}s de logarithmes},
J. Number Theory, \textbf{24},  (1986), 284--297.

\bibitem{RV0}
G.~Rhin and  C.~Viola,
{\it On a permutation group related to $\zeta (2)$}, Acta Arith., \textbf{77}, \ no.~1,  
 (1996),  23--56.

\bibitem{RV1}
G.~Rhin and  C.~Viola,
{\it The permutation group method for the dilogarithms},
Ann.  Scuola Nor.  Sup. Pisa CL. Sci., \textbf{4},  \ no.~3, (2005), 389--437.


\bibitem{Ri}
{T. Rivoal,
{\it Ind\'ependance lin\'eaire des valeurs des polylogarithmes},
J. Th\'eorie des Nombres Bordeaux,  \textbf{15}, \ no.~2,  (2003), 551--559.}



\bibitem{Siegel}
C.~Siegel,
{\it Uber einige Anwendungen diophantischer Approximationen}, 
Abhandlungen der Preu$\ss$ischen Akademie der Wissenschaften.
Physikalisch Mathematische Kalasse 1929, Nr.~1.


\bibitem{Va}
K.~V$\ddot{\text{a}}$$\ddot{\text{a}}$n$\ddot{\text{a}}$nen,
{\it On linear forms of a certain class of $G$-functions},
Acta Arith., Vol. \textbf{36}, (1980), 273--295.

\bibitem{Va-Gu}
K.~V$\ddot{\text{a}}$$\ddot{\text{a}}$n$\ddot{\text{a}}$nen, G. Xu
{\it On linear forms of $G$-functions},
Acta Arith., Vol. \textbf{50}, (1988), 251--263.

\bibitem{VZ1}
C.~Viola and W. Zudilin,
{\it Linear independence of dilogarithmic values},
J. Reine Angew Math., \textbf{736}, (2018),  193--223.

\bibitem{Z}
{W. Zudilin,
{\it On a measure of irrationality for values of $G$-functions},
Math. USSR Izv., \textbf{60}, no.\ 1, (1996), 91--118. }
\end{thebibliography}

\begin{scriptsize}
\begin{minipage}[t]{0.38\textwidth}

Sinnou David,
\\sinnou.david@imj-prg.fr
\\Institut de Math\'ematiques
\\de Jussieu-Paris Rive Gauche
\\CNRS UMR 7586,\\
Sorbonne Universit\'{e}, Paris, France\\
\& CNRS UMI 2000 Relax\\
 Chennai Mathematical Institute\\
 Kelambakkam, India \\\\
\end{minipage}
\begin{minipage}[t]{0.32\textwidth}
Noriko Hirata-Kohno, 
\\hirata@math.cst.nihon-u.ac.jp
\\Department of Mathematics
\\College of Science \& Technology
\\Nihon University
\\Tokyo, Japan\\\\
\end{minipage}
\begin{minipage}[t]{0.3\textwidth}
Makoto Kawashima,
\\kawashima.makoto@nihon-u.ac.jp
\\Department of Liberal Arts \\and Basic Sciences
\\College of Industrial Engineering
\\Nihon University, Chiba, Japan\\\\
\end{minipage}

\end{scriptsize}

\end{document}